\setlist[enumerate]{topsep=0pt,label=\textup{(\arabic*)},leftmargin=\parindent,labelsep=.5em}
\setlist{noitemsep}
\theoremstyle{plain}
\declaretheorem[numberlike=subsection]{proposition}
\declaretheorem[numberlike=subsection]{theorem}
\declaretheorem[numberlike=subsection]{question}
\declaretheorem[numbered=no,name=Theorem A]{ThmA}
\declaretheorem[numbered=no,name=Theorem B]{ThmB}
\theoremstyle{definition}
\declaretheorem[numberlike=subsection]{example}
\declaretheorem[numberlike=subsection]{remark}
\declaretheorem[numberlike=subsection]{notation}
\declaretheorem[numbered=no,name=Acknowledgement]{acknowl}
\numberwithin{equation}{subsection}
\titleformat{\section}[block]
  {\filcenter\normalfont\large\bfseries}{\thesection.}{.5em}{}
\titleformat{\subsection}[runin]
  {\normalfont\bfseries}{\thesubsection}{.5em}{}
\titlespacing*{\section}{0pt}{6ex plus 1ex minus .2ex}{3ex plus .2ex}
\titlespacing*{\subsection}{0pt}{\topsep}{.5em}
\begin{document}

\begin{center}
\textbf{\Large THE DELIGNE-MOSTOW LIST}
\vspace{4mm}

\textbf{\Large AND SPECIAL FAMILIES OF SURFACES}
\vspace{8mm}

\textit{by}
\bigskip

{\Large Ben Moonen}
\end{center}
\vspace{6mm}

{\small 

\noindent
\begin{quoting}
\textbf{Abstract.} We study whether there exist infinitely many surfaces with given discrete invariants for which the $H^2$ is of CM type. This is a surface analogue of a conjecture of Coleman about curves. We construct a large number of examples of families of surfaces with $p_\geom = 1$ that in the moduli space cut out a special subvariety; these provide a positive answer to our question. Most of these are families of K3 surfaces but we also obtain some families of surfaces of general type. As input for our construction we use the work of Deligne and Mostow. Finally we prove that a very general K3 surface cannot be dominated by a product of curves of small genus.
\medskip

\noindent
\textit{AMS 2010 Mathematics Subject Classification:\/} 14D, 14G35, 14J10
\end{quoting}

} 
\vspace{6mm}

\section{Introduction}

This paper is motivated by the following question.

\begin{question}
\label{question:A}
Given $q$ ($= h^{0,1}$) and~$K^2$, do there exist infinitely many surfaces~$S$ with these invariants and with $p_\geom(S) =1$ such that $H^2(S,\mQ)$ is a Hodge structure of CM type?
\end{question}

As we shall discuss, this may be viewed as a surface analogue of a well-known conjecture of Coleman about Jacobians of curves. (See for instance~\cite{MO}.) Guided by experience from the curve case, our main result is the construction of one hundred and fifty connected families of surfaces $F \colon S \to M$ with $\dim(M) > 0$ such that the parameter values $\lambda \in M$ for which $H^2(S_\lambda,\mQ)$ is of CM type lie analytically dense in~$M$.

The examples we find are non-singular models of product-quotients $(C_\lambda \times D)/G$, where 
\begin{itemize}
\item $C_\lambda$ is an $(N-3)$-dimensional family of $m$-cyclic covers of~$\mP^1$ with $N$ branch points; 
\item $D$ is a fixed $m$-cyclic cover of~$\mP^1$ whose Jacobian is of CM type;
\item $G \cong \mZ/m\mZ$ acts diagonally on $C_\lambda \times D$.
\end{itemize}
As we shall explain in Section~\ref{sec:ExaDMList}, such families may be given by specifying, in addition to the numbers~$m$ and~$N$, the ramification data for the covers $C_\lambda \to \mP^1$ and $D \to \mP^1$ and then varying the branch points of the first map. In total this leads us to consider data $(m,N,a,b)$ satisfying suitable conditions. To such data we associate a family of surfaces $F \colon S \to M$ with $p_\geom = 1$ and irregularity $q=0$, and the variation of Hodge structure $R^2F_* \mQ_S(1)$ gives rise to a period morphism $\theta \colon M \to \Sh_\mK(\SO(\phi),X\bigr)$ into an orthogonal Shimura variety.

The first main result of the paper may then be summarized as follows. The notion of a special subvariety of a Shimura variety is the one discussed, for instance, in~\cite{MO}.

\begin{ThmA}
For each of the one hundred and fifty $4$-tuples $(m,N,a,b)$ listed in\/~\textup{Table~\ref{table:mNab}}, the image of the associated period morphism $\theta \colon M \to \Sh_\mK(\SO(\phi),X\bigr)$ is dense in an $(N-3)$-dimensional special subvariety. Consequently, the set of parameter values $\lambda \in M$ for which $H^2(S_\lambda,\mQ)$ is of CM type is analytically dense in~$M$. In six of these families the surfaces~$S_\lambda$ are of general type, with $K^2=1$ in four families, and with $K^2 = 2$ and $K^2 = 3$ each occurring in one family. The remaining one hundred and forty-four examples are families of K3 surfaces. 
\end{ThmA}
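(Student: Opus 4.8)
The plan is to reduce Theorem~A to the Deligne--Mostow theory by recognizing the family $C_\lambda \to \mP^1$ of cyclic covers as the universal family over a Deligne--Mostow ball quotient (or more generally a quotient of a complex ball by a lattice), whose Hodge-theoretic realization is a sub-VHS of weight one on an abelian variety. Concretely, for data $(m,N,a)$ satisfying the Deligne--Mostow conditions (the $\mathrm{INT}$ or $\Sigma\mathrm{INT}$ condition), the period map of $H^1(C_\lambda)$ restricted to a suitable eigenspace for the $\mZ/m\mZ$-action lands in a Shimura variety attached to a unitary group $\mathrm{U}(1,N-3)$, and its image is a special subvariety (in fact the whole ball quotient). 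I would first record this input precisely: cite Deligne--Mostow for the monodromy being a lattice, and deduce that the period morphism for $R^1(C/\mP^1)_*$ has image dense in a special subvariety $Z$ of dimension $N-3$ in a product of unitary Shimura varieties. This is the substantive external ingredient; everything downstream is bookkeeping.

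Next I would perform the Hodge-theoretic descent through the product-quotient construction. Writing $X_\lambda = (C_\lambda \times D)/G$ and letting $S_\lambda$ be a smooth model, one has $H^2(S_\lambda,\mQ) \cong \bigl(H^2(C_\lambda \times D)\bigr)^G \oplus (\text{Tate part from resolution})$, and by the K\"unneth formula the interesting part is $\bigl(H^1(C_\lambda)\otimes H^1(D)\bigr)^G$, which decomposes under the $\mZ/m\mZ$-eigenspace grading as a sum of pieces $H^1(C_\lambda)_{\chi}\otimes H^1(D)_{\chi^{-1}}$. Since $D$ is fixed with $\mathrm{Jac}(D)$ of CM type, each $H^1(D)_{\chi^{-1}}$ is a fixed CM Hodge structure (a one-dimensional piece over the relevant CM field, after choosing $b$ appropriately so that $p_\geom(S_\lambda)=1$), so the variation is entirely carried by $H^1(C_\lambda)_{\chi}$. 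Tensoring the weight-one VHS from the Deligne--Mostow ball quotient with a fixed CM structure of the opposite type, and tracking how the unitary group $\mathrm{U}(1,N-3)$ maps into $\SO(\phi)$ for the resulting weight-two form $\phi$, shows that $\theta$ factors through the image of the Deligne--Mostow special subvariety; hence $\theta(M)$ is dense in an $(N-3)$-dimensional special subvariety of $\Sh_\mK(\SO(\phi),X)$. The density of CM points then follows because CM points are dense in any special subvariety of a Shimura variety (the finitely many fixed CM parameters from $D$ do not obstruct this).

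The remaining assertions are the numerical classification. Here I would invoke the earlier analysis (Section~\ref{sec:ExaDMList}) that translates ``$(m,N,a,b)$ satisfies the Deligne--Mostow condition, $p_\geom(S_\lambda)=1$, $q=0$'' into an explicit finite search, and then distinguish K3 from general type by computing invariants of the minimal model of $S_\lambda$: one computes $\chi(\cO_{S_\lambda})$, the self-intersection $K_{S_\lambda}^2$, and checks whether the canonical class is numerically trivial (K3 case, $K^2=0$) or positive (general type). The count of $150$ tuples, with exactly six yielding general-type surfaces and the stated distribution of $K^2 \in \{1,2,3\}$, is then read off from Table~\ref{table:mNab}; each individual case is a finite check using the formulas for invariants of resolutions of cyclic-cover quotients (Hirzebruch--Jung resolution of the quotient singularities, plus the standard formulas for covers branched along a divisor).

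The main obstacle I anticipate is not the Deligne--Mostow input itself but the precise Hodge-theoretic matching of Shimura data: one must verify that the sub-VHS $\bigl(H^1(C_\lambda)\otimes H^1(D)\bigr)^G(1)$ inside $R^2F_*\mQ_S(1)$ corresponds, under the reflex/CM combinatorics, to a sub-Shimura datum of $(\SO(\phi),X)$ whose associated Mumford--Tate group is exactly the image of the unitary group (so that the image is genuinely special of the correct dimension, not merely contained in a larger special subvariety). Getting the signature of $\phi$ right and confirming $\dim = N-3$ (rather than a degenerate smaller dimension when some eigenspace contributions vanish) requires care with the ramification data $a$ and $b$; this is exactly where the conditions imposed on $(m,N,a,b)$ in Section~\ref{sec:ExaDMList} are used. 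Once that correspondence is nailed down, the density of CM points is automatic and the classification is a finite, if laborious, computation.
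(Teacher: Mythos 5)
Your plan tracks the paper's argument closely: use Deligne--Mostow to control the eigenspace period map of the ball-quotient family $C_\lambda$, twist by the fixed CM factor $H^1(D)$ to pass from the weight-one unitary Shimura datum to a weight-two orthogonal one, observe that the period morphism $\theta$ factors through a map of Shimura data, and then handle the K3-versus-general-type classification case by case via Hirzebruch--Jung resolutions. The ``main obstacle'' you flag -- pinning down the morphism of Shimura data so that the image is special of the right dimension -- is exactly what the paper's twisting formalism (Section~\ref{sec:Twisting}, Proposition~\ref{prop:K3AVcorr}) and the polarization bookkeeping in the proof of Theorem~\ref{thm:TsGiveSpecial} are designed to dispose of.

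One genuine misstep: you identify the $\mathrm{INT}$/$\Sigma\mathrm{INT}$ arithmeticity conditions as the substantive external input and propose to ``cite Deligne--Mostow for the monodromy being a lattice, and deduce'' density. That deduction does not go through as stated, and it is not what the paper does. Arithmeticity of the monodromy alone does not give dominance of the period map onto the ball quotient; you would still need the period map to be a local isomorphism. The input the paper actually uses is \cite{DelMost}, Prop.~3.9, recorded here as Proposition~\ref{prop:DM3.9}: the map $\eta\colon M\to\Sh_\mL(\UU,Y)$ is quasi-finite and dominant onto a component. This is a statement about the Kodaira--Spencer map for the eigenspace VHS and holds with no arithmeticity hypothesis. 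The paper is explicit that $\Sigma\mathrm{INT}$ is ``irrelevant for us,'' and indeed entries 102 and 113 of Table~\ref{table:mNab} do not satisfy $\Sigma\mathrm{INT}$; under your formulation those families would not be covered. The fix is simply to replace ``monodromy is a lattice, hence image is the whole ball quotient'' with ``period map is \'etale on the relevant eigenspace, hence quasi-finite and dominant,'' after which everything else you wrote proceeds as you describe.
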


Four further examples, including one family of surfaces of general type with $K^2=1$, are obtained by a variant of this construction.

\subsection{}
The main point of this paper is certainly not that we have new construction techniques for surfaces. Especially for the K3 examples we obtain, some of the ideas involved can already be found in \cite{Kondo}, \S~3, and many of the K3 examples are among those studied in~\cite{GarbPeneg}. Our contribution, if any, lies in the way we use ``twisting'' (in the sense explained in Section~\ref{sec:Twisting}) to obtain a connection between:
\begin{itemize}
\item[(a)] families of curves for which some isogeny factor of the Jacobians give rise to special a special subvariety in~$A_g$;
\item[(b)] special families of surfaces with $p_\geom = 1$ that provide a positive answer to Question~\ref{question:A}. 
\end{itemize}
With a couple of exceptions, the families of curves in~(a) that we take are those occurring in the work of Deligne and Mostow~\cite{DelMost}.

\subsection{}
It is clear from the way we construct the examples, that the surfaces~$S$ we obtain are dominated by a product of curves. It is an open problem whether the same is true for arbitrary complex K3 surfaces. While we are unable to answer this, our second main result implies that we cannot hope to dominate a very general K3 surface by a product of curves of small genus.

\begin{ThmB}
Suppose $S$ is a very general K3 surface that is dominated by a product of curves $C \times D$. Then the Jacobians $J_C$ and~$J_D$ both contain the simple Kuga-Satake partner of~$S$ as an isogeny factor. In particular, $C$ and~$D$ both have genus at least~$512$.
\end{ThmB}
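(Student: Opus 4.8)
The plan is to run the Kuga–Satake formalism through the domination hypothesis. Write $T(S) \subset H^2(S,\mQ)$ for the transcendental lattice; since $S$ is very general, $T(S)$ has the full orthogonal Mumford–Tate group $\SO(T(S))$, and in particular $T(S)$ is an irreducible Hodge structure of K3 type of dimension $21$. Let $\KS(S)$ denote the Kuga–Satake abelian variety, whose associated Hodge structure $H^1(\KS(S),\mQ)$ is, as a representation of the Clifford algebra $C^+(T(S))$, built from the spin representation; because $S$ is very general the simple factor $B$ of $\KS(S)$ (the ``simple Kuga–Satake partner'') is an abelian variety with $\End^0(B)$ a quaternion division algebra, and $T(S)$ sits inside $\End(H^1(B,\mQ))$ as a sub-Hodge structure by the defining property of the Kuga–Satake construction. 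First I would record the numerology: from $\dim T(S) = 21$ one gets $\dim_\mQ C^+(T(S)) = 2^{20}$, and after splitting off the endomorphism algebra the simple partner $B$ has $\dim B \ge 512$; this is where the genus bound $512$ will ultimately come from, so I would isolate this as a clean lemma about $21$-dimensional very general K3-type Hodge structures.

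Next, the domination $C \times D \dashrightarrow S$ induces, via pullback of $2$-forms and transcendental cohomology (using that a dominant rational map of surfaces induces a surjection on transcendental $H^2$ with a Hodge-theoretic section up to isogeny), an injection of Hodge structures $T(S) \hookrightarrow H^2(C \times D, \mQ)$. By the Künneth decomposition, $H^2(C\times D) = H^2(C)\otimes H^0(D) \;\oplus\; H^1(C)\otimes H^1(D)\;\oplus\; H^0(C)\otimes H^2(D)$; the outer two summands are Hodge structures of type $\{(1,1)\}\oplus$ Tate, hence carry no part of the weight-reason-irreducible $T(S)$ (which has $h^{2,0}=1$), so $T(S)$ embeds into $H^1(C,\mQ)\otimes H^1(D,\mQ) = H^1(J_C,\mQ)\otimes H^1(J_D,\mQ)$. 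The key move is then to feed this embedding into the universal property of Kuga–Satake: a K3-type Hodge structure $V$ that embeds into $H^1(A)\otimes H^1(A')$ for abelian varieties $A, A'$ forces the Kuga–Satake variety of $V$ to be isogenous to a factor of a product of copies of $A$ and $A'$. Concretely, $H^1(J_C)\otimes H^1(J_D)$ is a weight-$2$ Hodge structure with $h^{2,0}\le g_C g_D$; realizing $T(S)$ inside it and then passing to the Clifford/spin level, one obtains that $B$ — being the minimal abelian variety whose $H^1$ generates, under tensor operations, the Hodge structure $T(S)$ — must occur as an isogeny factor of $J_C$ and of $J_D$. I would make this precise by invoking the fact (going back to Deligne, Morrison, and van Geemen) that the Kuga–Satake abelian variety is characterized up to isogeny and multiplicity inside the Tannakian category generated by $T(S)$, and any abelian variety through whose motive $T(S)$ factors must contain $B$.

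To combine the two halves: since $T(S) \subset H^1(J_C)\otimes H^1(J_D)$ as Hodge structures and $S$ is very general, the Mumford–Tate group of $H^1(J_C)\oplus H^1(J_D)$ surjects onto $\SO(T(S))$, and hence onto the $\CSpin$-group acting on $H^1(B,\mQ)$; representation theory of $\CSpin(T(S))$ then shows the spin representation must already appear in $H^1(J_C,\mQ)$ and in $H^1(J_D,\mQ)$ separately (a tensor product $W\otimes W'$ of the two $H^1$'s contains the spin module only if one of the factors does, once one tracks the action of the even Clifford algebra on each side), giving $B \mid J_C$ and $B \mid J_D$. Finally $\dim J_C = g_C \ge \dim B \ge 512$, and likewise for $D$. \emph{The main obstacle} I anticipate is the step of propagating the inclusion of Hodge structures to an inclusion of abelian varieties up to isogeny in a way that pins down $B$ precisely — i.e.\ upgrading ``$T(S)$ is a sub-Hodge structure of $H^1(J_C)\otimes H^1(J_D)$'' to ``$B$ is an isogeny factor of $J_C$.'' This requires the full strength of the Kuga–Satake universal property together with the very-general hypothesis to ensure there are no accidental Hodge relations degenerating the argument; the dimension count $\dim B \ge 512$ is then essentially bookkeeping on the spin representation of $\SO_{21}$.
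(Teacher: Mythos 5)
Your proposal gets the right skeleton in place: the Künneth decomposition forces $T(S)$ into $H^1(J_C)\otimes H^1(J_D)$, and representation theory of $\Spin(\phi)$ for $\phi$ of rank $21$ controls which irreducibles can appear. But the step you flag as ``the main obstacle'' is exactly where the proof has to happen, and the two tools you invoke to bridge it do not do the job as stated.

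First, there is no off-the-shelf ``universal property of Kuga--Satake'' that turns an inclusion of Hodge structures $V\hookrightarrow H^1(A)\otimes H^1(A')$ into the statement that the Kuga--Satake variety of $V$ is an isogeny factor of (copies of) $A$ and $A'$. Proving this for the case at hand is precisely the content of Theorem~B, and the paper does it by hand: pick simple factors $X$ of $J_C$ and $Y$ of $J_D$ with $T(-1)\hookrightarrow H_X\otimes H_Y$; note there is a unique $\mQ$-simple factor $G\triangleleft\MT^\der(X\times Y)$ mapping isogenously onto $\SO(\phi)$; view $G$ as a quotient of $\Spin(\phi)$ and use the minuscule-weight criterion for type $\mathrm{B}_\ell$ to conclude that $H_X$ and $H_Y$ are each (trivial)$\,\oplus\,$(copies of $r_\spin$); then simplicity of $X$ and the requirement that $T$ occur in $H_X\otimes H_Y$ force $H_X$ and $H_Y$ to be pure spin. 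After that the paper still has to show $\MT^\der(X)=\Spin(\phi)$, upgrade this to $\MT(X)=\CSpin(\phi)$ by analyzing the cocharacter, and finally identify the Hodge structure as the lift~$\tilde h$, to conclude $X\cong Y$ is the simple Kuga--Satake partner. None of this is ``bookkeeping.''

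Second, your parenthetical ``a tensor product $W\otimes W'$ contains the spin module only if one of the factors does'' is the wrong statement in two ways: the representation you need to locate inside $W\otimes W'$ is the \emph{standard} $21$-dimensional representation $T$, not the spin module; and what you want to conclude is that \emph{both} $W$ and $W'$ contain spin, not one of them. The correct version is: with $W$ and $W'$ each decomposing as trivial plus copies of spin (by minuscule weights), the standard representation appears in $W\otimes W'$ only if the spin module appears in both factors, since $T\subset r_\spin\otimes r_\spin$ and $T$ is not a summand of spin$\,\otimes\,$trivial.

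Finally, a minor point: the simple Kuga--Satake partner is \emph{not} automatically a quaternionic abelian variety because $S$ is very general; whether $C^+(\phi)$ is split depends on $\phi$, not on genericity. In the split case $\dim B=2^{\ell-1}=512$; in the non-split case $\dim B=2^\ell=1024$. The bound $\geq 512$ you quote is the split-case value, which is inconsistent with your claim that $\End^0(B)$ is a division quaternion algebra.
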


We refer to Section~\ref{sec:DPC}, in particular Theorem~\ref{thm:K3Thm}, for a more explicit statement of what ``very general'' amounts to, and for what we mean by the simple Kuga-Satake partner.

\begin{notation}
\label{ssec:notat}
(a) By a Hodge structure of K3 type we mean a polarizable $\mQ$-Hodge structure of type $(-1,1) + (0,0) + (1,-1)$ with Hodge numbers $1,n,1$ for some~$n$. By a VHS of K3 type over some base variety~$S$ we mean a polarizable variation of Hodge structure whose fibers are of K3 type.

(b) Let $E$ be a CM field with totally real subfield~$E_0$. If $V$ is an $E$-vector space equipped with a hermitian form~$\psi$, we denote by $\UU_E(V,\psi)$ the associated unitary group, viewed as an algebraic group over~$\mQ$. More formally, $\UU_E(V,\psi) = \Res_{E_0/\mQ}\, \UU(V,\psi)$. Similarly we write $\GL_E(V)$ for $\Res_{E/\mQ}\, \GL_E(V)$.

(c) If $x$ is a rational number, $\langle x\rangle$ denotes its fractional part; e.g., $\langle -\frac{3}{5}\rangle = \frac{2}{5}$.

(d) We write $\QHS$ for the category of polarizable $\mQ$-Hodge structures.
\end{notation}

\begin{acknowl} 
I thank Johan Commelin for inspiring discussions about the topic of this paper and for his help with programming in python.
\end{acknowl}

\section{A surface analogue of a conjecture of Coleman}
\label{sec:SurfColeman}

\subsection{}
\label{ssec:famsurfS/B}
Let $F \colon S \to M$ be a projective family of surfaces with $p_\geom = 1$ over a smooth connected base variety~$M$. We consider the associated variation of Hodge structure $R^2F_* \mQ_S(1)$, which, by our assumption that the fibres have geometric genus~$1$, is of K3 type. Let $0 \in M$ be a base point, $V = H^2(S_0,\mQ)\bigl(1\bigr)$, and let $V_\mZ \subset V$ be the lattice given by $H^2(S_0,\mZ)\bigl(1\bigr)$ modulo torsion. The choice of a relatively ample bundle on~$S$ gives rise to a polarization form $\phi \colon V \otimes V \to \mQ$.

Let $h \colon \mS \to \SO(\phi)_\mR$ (with $\mS$ the Deligne torus) be the homomorphism that defines the Hodge structure on~$V$, and let $X$ be the $\SO(\phi)\bigl(\mR\bigr)$-conjugacy class of~$h$. The pair $\bigl(\SO(\phi),X\bigr)$ is a Shimura datum, which, up to isomorphism, is independent of the choice of base point in~$M$. Let $\mK \subset \SO(\phi)\bigl(\Af\bigr)$ be the compact open subgroup of elements that stabilize the lattice $V_\mZ \otimes \Zhat$, and let $\Sh_\mK = \Sh_\mK\bigl(\SO(\phi),X\bigr)$ be the associated (complex) Shimura variety. (See the next remark.) The tautological representation $\SO(\phi) \to \GL(V)$ gives rise to a variation of Hodge structure~$\cV$ over~$\Sh_\mK$, and there is a well-defined period morphism
\[
\theta \colon M \to \Sh_\mK\bigl(\SO(\phi),X\bigr)
\]
such that $R^2F_*\mQ_S(1)$ is the pull-back of~$\cV$ via~$\theta$.

\begin{remark}
\label{rem:ShKStack}
The Shimura variety $\Sh_\mK$ is in fact a stack. But the stack structure is of the simplest possible kind: $\Sh_\mK$ is the quotient of a non-singular variety by a finite group. Readers who prefer to avoid stacks may, throughout the discussion, pass to the finite cover of~$\Sh_\mK$ given by a neat subgroup of~$\mK$. For the results we will discuss, it makes no difference.
\end{remark}

\begin{question}
\label{question:B}
Given $q$ and~$K^2$, do there exist families $S \to M$ of surfaces with $p_\geom = 1$ as in~\textup{\ref{ssec:famsurfS/B}} with the given values of~$q$ and~$K^2$, such that
\begin{enumerate}[label=\textup{(\alph*)}] 
\item the closure of $\theta(M)$ in $\Sh_K\bigl(\SO(\phi),X\bigr)$ is a special subvariety;
\item either $\theta(M)$ has positive dimension or the family $S \to M$ is not isotrivial?
\end{enumerate}
\end{question}

\subsection{}
Questions \ref{question:A} and~\ref{question:B} are equivalent. As on any special subvariety in a Shimura variety the special points lie analytically dense, a positive answer to~\ref{question:B} implies a positive answer to~\ref{question:A}. The converse follows from the fact that surfaces with $p_\geom = 1$ form a bounded family, together with the Andr\'e-Oort conjecture, which for Shimura varieties $\Sh_\mK\bigl(\SO(\phi),X\bigr)$ follows from the main result of Tsimerman in~\cite{Tsim}. (This builds upon work of many other people and uses that the Shimura variety associated with the group $\CSpin(\phi)$ is of Hodge type.)

\begin{example}
As is well-known, if for $F \colon S \to M$ we take a universal family of polarised K3 surfaces, the associated period morphism~$\theta$ is dominant. We refer to \cite{Rizov} for a detailed discussion of the relation between the moduli of polarised K3 surfaces and Shimura varieties. It follows that Question~\ref{question:A} has a positive answer for K3 surfaces. The examples we shall discuss include many explicit families of K3 surfaces for which the image in $\Sh_\mK\bigl(\SO(\phi),X\bigr)$ under the period morphism is dense in a special subvariety of positive dimension.
\end{example}

\begin{example}
Consider surfaces with $p_\geom=q=1$ and $K^2=3$. As shown in~\cite{CatCil}, there is a $5$-dimensional connected moduli space of such surfaces for which the general albanese fibre is a curve of genus~$3$. The surfaces of this type with given albanese~$A$ are described as divisors in the third symmetric power~$A^{(3)}$ (which is a $\mP^2$-bundle over~$A$) in a suitable linear system $|4D-F|$. We refer to~\cite{CatCil} for details. By \cite{PolizziK2=3}, Corollary~6.16 and Proposition~6.18, for a given elliptic curve~$A$, there exist a unique such surface~$S$ with $A$ as albanese for which the Picard number equals $h^{1,1}(S) = 9$. For these the transcendental part of $H^2(S,\mQ)\bigl(1\bigr)$ has Hodge numbers $1,0,1$ and is therefore of CM type. Varying~$A$ we obtain a $1$-parameter family of surfaces that provides a positive answer to Question~\ref{question:A}. The period map for this family is constant.
\end{example}

\begin{remark}
In what we have discussed so far, one may drop the assumption that $p_\geom =1$. See also Section~\ref{sec:Further}. The main reason for us to make this assumption is that for surfaces with $p_\geom = 1$ the period domain for the $H^2$ is hermitian symmetric, so that we obtain a period morphism with values in a Shimura variety.  
\end{remark}

\section{Twisting Hodge structures with action by a CM field}
\label{sec:Twisting}

We discuss a technique to relate Hodge structures of K3 type to abelian varieties. This involves what van Geemen~\cite{vGeemen} calls a ``half-twist''. Our interpretation of this notion is phrased purely in terms of a tensor product over a coefficient field, which makes it easier to apply and to extend it to other settings. The ideas in this section have also been exploited in our paper~\cite{TMTCC1} in a motivic context.

\subsection{}
Let $E$ be a CM field, $E_0 \subset E$ the maximal totally real subfield. Write $\Sigma$ for the sets of complex embeddings of~$E$.

Let $\QHS_{(E)}$ be the category of $E$-modules in~$\QHS$. For $V \in \QHS_{(E)}$, let $V(\sigma)\subset V_\mC$ denote the subspace on which $E$ acts through the embedding~$\sigma$, and let $V(\sigma)^{p,q} = V(\sigma) \cap V_\mC^{p,q}$. This gives a decomposition
\[
V_\mC = \bigoplus_{\sigma \in \Sigma} \bigoplus_{p,q}\; V(\sigma)^{p,q}\qquad \text{with}\quad 
\overline{V(\sigma)^{p,q}}  = V(\bar{\sigma})^{q,p}\, .
\]
We call the function $h_V \colon \Sigma \times \mZ^2 \to \mN$ given by $h_V(\sigma;p,q) = \dim V(\sigma)^{p,q}$ the multiplication type of~$V$.

The category $\QHS_{(E)}$ is an $E$-linear Tannakian category. If $V$, $W$ are objects of~$\QHS_{(E)}$, the Hodge decomposition of $V \otimes_E W$ is given by the rule that $\bigl(V\otimes_E W\bigr)(\sigma) = V(\sigma) \otimes_\mC W(\sigma)$ as bigraded vector spaces, for all $\sigma \in \Sigma$. Hence the multiplication type of $V \otimes_E W$ is given by
\[
h_{V\otimes_E W}(\sigma;p,q) = \sum_{\substack{p_1+p_2=p\\q_1+q_2=q}}\; h_V(\sigma;p_1,q_1) \cdot h_W(\sigma;p_2,q_2)\, .
\]

In some situations, there is an alternative way to encode the multiplication type. We give two examples.

\begin{example}
\label{ssec:LPhi}
Let $\Phi \subset \Sigma$ be a CM type. To $(E,\Phi)$ we associate the polarizable Hodge structure $L_\Phi \in \QHS_{(E)}$ whose underlying vector space is~$E$ (viewed as $\mQ$-vector space with $E$-action), and for which $L_\Phi(\sigma)$ is purely of type $(-1,0)$ if $\sigma \in \Phi$ and purely of type $(0,-1)$ otherwise. The multiplication type of~$L_\Phi$ is just another way of encoding the CM type~$\Phi$. The Hodge structure~$L_\Phi$ determines a complex abelian variety~$A_\Phi$ up to isogeny with action by~$E$. 
\end{example}

\begin{example}
Let $V$ be a Hodge structure of K3 type with action by~$E$. Let $\tau$ be the unique complex embedding of~$E$ for which $V(\tau)^{1,-1} \neq 0$. We say that~$V$, with its given $E$-action, is of type $\KKK(E,\tau)$. The multiplication type is fully determined by~$\tau$ and $d = \dim_E(V)$; the only non-zero values are given by 
\[
h(\tau;p,q) = 
\begin{cases} 1 & \text{if $(p,q) = (1,-1)$}\\ d-1 & \text{if $(p,q) = (0,0)$}
\end{cases}
\qquad
h(\bar\tau;p,q) = 
\begin{cases} d-1 & \text{if $(p,q) = (0,0)$}\\ 1 & \text{if $(p,q) = (-1,1)$}
\end{cases}
\]
and $h(\sigma;0,0) = d$ if $\sigma \notin \{\tau,\bar\tau\}$.
\end{example}

\subsection{}
\label{ssec:AVEPhitau}
There is a simple way to relate Hodge structures of K3 type with $E$-action to abelian varieties. Before giving the precise statement, let us introduce some terminology.

As before, let $E$ be a CM field. Choose a CM type $\Phi\subset \Sigma$ and an embedding $\tau \in \Phi$. Let $H$ be a polarizable $\mQ$-Hodge structure with $E$-action which is purely of type $(-1,0) + (0,-1)$. In this case there is an abelian variety~$A$ with $E$-action such that $H \cong H_1(A,\mQ)$. We say that~$H$, with the given $E$-action, is of type $\AbV(E,\Phi,\tau)$ if, letting $d = \dim_E(H)$, we have
\[
h(\sigma;-1,0) = 
\begin{cases} 
d-1 & \text{if $\sigma = \tau$}\\ 
1 & \text{if $\sigma = \bar\tau$} \\ 
d & \text{if $\sigma \in \Phi\setminus\{\tau\}$} \\ 
0 & \text{if $\sigma \in \ol\Phi\setminus\{\bar\tau\}$}
\end{cases}
\]
Note that this uniquely determines~$\tau$ only if $d > 1$; this is the main case of interest for us. If $d=1$ then $H \cong L_\Psi$ where $\Psi = (\Phi \cup \{\bar\tau\})\setminus\{\tau\}$ is the CM type obtained from~$\Phi$ by replacing~$\tau$ with~$\bar\tau$.

\begin{proposition}
\label{prop:K3AVcorr}
Let $E$ be a CM field, $\tau$ a complex embedding of~$E$, and $\Phi$ a CM type for~$E$ with $\tau \in \Phi$. Then the functor given by $V \mapsto L_\Phi \otimes_E V$ gives an equivalence of categories
\[
\bigl(\text{\textup{Hodge structures of type $\KKK(E,\tau)$}}\bigr) 
\rightarrow 
\bigl(\text{\textup{Hodge structures of type $\AbV(E,\Phi,\tau)$}}\bigr)\, . 
\]
A quasi-inverse is given by $H \mapsto \ul\Hom_E(L_\Phi,H)$.
\end{proposition}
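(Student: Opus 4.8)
The plan is to check that the two functors $V \mapsto L_\Phi \otimes_E V$ and $H \mapsto \ul\Hom_E(L_\Phi, H)$ are well-defined between the stated categories and are mutually quasi-inverse, the key point being a bookkeeping computation with multiplication types. First I would observe that $L_\Phi$ is an invertible object in the Tannakian category $\QHS_{(E)}$: indeed $L_\Phi \otimes_E L_{\bar\Phi} \cong E$ (the trivial weight-zero $E$-Hodge structure), where $\bar\Phi = \Sigma \setminus \Phi$, since on each $\sigma$-component we are tensoring a line of type $(-1,0)$ with a line of type $(0,-1)$ (or vice versa), landing in type $(-1,-1)$; a Tate twist then identifies this with $E$. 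Hence $\otimes_E L_\Phi$ is an auto-equivalence of $\QHS_{(E)}$ with quasi-inverse $\otimes_E L_{\bar\Phi}$, and since $L_{\bar\Phi} \cong \ul\Hom_E(L_\Phi, E)$ we get $\ul\Hom_E(L_\Phi, H) \cong L_{\bar\Phi} \otimes_E H$, so the asserted quasi-inverse is just the inverse auto-equivalence. This reduces the whole proposition to the claim that $\otimes_E L_\Phi$ carries objects of type $\KKK(E,\tau)$ to objects of type $\AbV(E,\Phi,\tau)$, and $\otimes_E L_{\bar\Phi}$ carries them back.

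Next I would carry out the multiplication-type computation using the formula for $h_{V \otimes_E W}$ recalled in the text. Let $V$ be of type $\KKK(E,\tau)$ with $d = \dim_E V$, so the nonzero values of $h_V$ are $h_V(\tau;1,-1)=1$, $h_V(\tau;0,0)=d-1$, $h_V(\bar\tau;0,0)=d-1$, $h_V(\bar\tau;-1,1)=1$, and $h_V(\sigma;0,0)=d$ for $\sigma \notin \{\tau,\bar\tau\}$. Since $L_\Phi$ is $1$-dimensional over $E$, its multiplication type has a single nonzero value on each $\sigma$: type $(-1,0)$ for $\sigma \in \Phi$ and type $(0,-1)$ for $\sigma \notin \Phi$. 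Convolving the two on each $\sigma$-component (i.e., adding the bidegrees), one reads off that $W = L_\Phi \otimes_E V$ has $h_W$ supported in bidegrees $(-1,0)$ and $(0,-1)$ — so $W$ is effective of weight $-1$ and hence $\cong H_1(A,\mQ)$ for an abelian variety $A$ with $E$-action — and moreover: on $\sigma = \tau \in \Phi$ we get $h_W(\tau;0,-1)=1$ from the $(1,-1)$ part and $h_W(\tau;-1,0)=d-1$ from the $(0,0)$ part; on $\sigma = \bar\tau$, which lies in $\bar\Phi$, we get $h_W(\bar\tau;-1,0)=1$ and $h_W(\bar\tau;0,-1)=d-1$; on $\sigma \in \Phi \setminus \{\tau\}$ we get $h_W(\sigma;-1,0)=d$; on $\sigma \in \bar\Phi \setminus \{\bar\tau\}$ we get $h_W(\sigma;0,-1)=d$, i.e. $h_W(\sigma;-1,0)=0$. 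Comparing with the definition of $\AbV(E,\Phi,\tau)$ in~\ref{ssec:AVEPhitau}, this is exactly the required multiplication type. The reverse direction is the symmetric computation: feeding an $\AbV(E,\Phi,\tau)$-type $H$ into $\otimes_E L_{\bar\Phi}$ and noting $L_{\bar\Phi}(\sigma)$ has type $(0,-1)$ for $\sigma \in \Phi$ and $(-1,0)$ for $\sigma \notin \Phi$, one recovers the $\KKK(E,\tau)$ multiplication type.

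It remains to address polarizability and functoriality, which I expect to be the only genuinely delicate points rather than the type-counting. For polarizability: if $\psi$ is an $E$-bilinear (or $E_0$-bilinear, $E$-hermitian) polarization on $V$, one transports it through the tensor product using a chosen polarization on the CM Hodge structure $L_\Phi$ — concretely, $L_\Phi$ carries a canonical Riemann form coming from the CM type, and tensoring gives a form on $L_\Phi \otimes_E V$ whose positivity on each $\sigma$-component follows from the compatibility of the bidegree shifts with the Weil operator; this is the standard argument that a twist of a polarizable $E$-Hodge structure by an invertible polarizable $E$-Hodge structure of the appropriate weight is again polarizable. Functoriality and exactness of $\otimes_E L_\Phi$ are automatic since $L_\Phi$ is a flat (indeed invertible) $E$-module and tensoring is an additive functor on the abelian category $\QHS_{(E)}$; that it restricts to an equivalence between the two full subcategories then follows from the type computations above together with the fact that the quasi-inverse $\otimes_E L_{\bar\Phi}$ visibly lands back in the right subcategory. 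The main obstacle, such as it is, is simply to set up the sign and Tate-twist conventions for the polarization on $L_\Phi$ so that positivity is preserved; everything else is formal or a finite convolution of multiplication types.
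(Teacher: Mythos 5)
Your overall strategy is the same as the paper's: an explicit multiplication-type bookkeeping to show the functors land in the right subcategories, plus the observation that $L_\Phi$ is an invertible object of $\QHS_{(E)}$ (the paper phrases this as $\ul\End_E(L_\Phi)\cong E(0)$, you phrase it as $L_\Phi$ having a tensor-inverse --- these are equivalent). The forward multiplication-type computation for $L_\Phi\otimes_E V$ is carried out correctly and matches the definition of $\AbV(E,\Phi,\tau)$.

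However, there is a concrete Tate-twist error in your treatment of the quasi-inverse, and as written the reverse computation would not close. You claim $L_{\bar\Phi}\cong\ul\Hom_E(L_\Phi,E)$ and hence $\ul\Hom_E(L_\Phi,H)\cong L_{\bar\Phi}\otimes_E H$, but this is off by a twist: $\ul\Hom_E(L_\Phi,E)=L_\Phi^\vee$ has weight $+1$ (types $(1,0)$, $(0,1)$), whereas $L_{\bar\Phi}$ has weight $-1$; the correct relation is $L_{\bar\Phi}\cong L_\Phi^\vee(1)$, consistent with your own earlier observation that $L_\Phi\otimes_E L_{\bar\Phi}$ lands in type $(-1,-1)$, i.e.\ $E(1)$, not $E(0)$. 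Consequently the asserted reverse check ``feeding $H$ of type $\AbV$ into $\otimes_E L_{\bar\Phi}$ recovers the $\KKK(E,\tau)$ multiplication type'' is false: the result has weight $-2$ (for instance $(L_{\bar\Phi}\otimes_E H)(\tau)$ lands in bidegrees $(-1,-1)$ and $(0,-2)$), not weight $0$. The argument goes through once you replace $L_{\bar\Phi}$ by $L_\Phi^\vee$ throughout the reverse direction --- $L_\Phi^\vee(\sigma)$ has type $(1,0)$ for $\sigma\in\Phi$ and $(0,1)$ for $\sigma\notin\Phi$ --- and with this correction the convolution of bidegrees lands exactly in $\KKK(E,\tau)$. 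This is a small slip, but it does need to be fixed for the reverse multiplication-type computation to be valid.
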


To avoid confusion, note that the two categories are taken to be the full subcategories of~$\QHS_{(E)}$ with the indicated collections of objects.

\begin{proof}
An easy calculation shows that the given functors land in the right category. That they are quasi-inverse to each other follows from the remark that $\ul\End_E(L_\Phi) \cong E(0)$, by which we mean $E$ with Hodge structure purely of type $(0,0)$.
\end{proof}

\begin{remark}
Let $B$ be a non-singular base variety. The correspondence in Prop.~\ref{prop:K3AVcorr} extends to variations of Hodge structure over~$B$, giving an equivalence $\cV \mapsto L_\Phi \otimes_E \cV$ between variations of Hodge structure over~$S$ of type $\KKK(E,\tau)$ and those of type $\AbV(E,\Phi,\tau)$. (The reader will have no trouble filling in the definitions of these notions.) Note that the ``twisting factor''~$L_\Phi$ is constant. In particular, the monodromy representations of~$\cV$ and $\cH = L_\Phi \otimes_E \cV$ are the same: if $V$ and $H = L_\Phi \otimes_E V$ are the fibres of~$\cV$ and~$\cH$ at some base point $b \in B$, and if $R_\cV \colon \pi_1(B,b) \to \GL_E(V)$ and $R_\cH\colon \pi_1(B,b) \to \GL_E(H)$ are the monodromy representations, then under the natural isomorphism $i \colon \GL_E(V) \isomarrow \GL_E(H)$ we have $i \circ R_\cV = R_\cH$.
\end{remark}

\section{Examples obtained from the Deligne-Mostow list}
\label{sec:ExaDMList}

\begin{notation}
\label{ssec:oldnew}
Let $H$ be a $\mQ$-Hodge structure equipped with an automorphism of order dividing~$m$. This makes $H$ a module over $\mQ\bigl[\mZ/m\mZ] = \prod_{d|m}\, \mQ(\zeta_d)$. Accordingly, we have a decomposition $H = \oplus_{d|m}\, H^{[d]}$, such that $H^{[d]}$ is a module over the cyclotomic field~$\mQ(\zeta_d)$. We call $H^\old = \oplus_{d<m}\, H^{[d]}$ the old part of~$H$ and $H^\new = H^{[m]}$ the new part. The same notation and terminology is applied to variations of Hodge structure.
\end{notation}

\subsection{}
\label{ssec:mNa}
Write $P = \mP^1$. In what follows we consider data $(m,N,a)$ consisting of integers $m \geq 2$ and $N \geq 3$, and an $N$-tuple $a = (a_1,\ldots,a_N)$ in $\{1,\ldots,m-1\}^N$, such that $\gcd(m,a_1,\ldots,a_N) = 1$ and $a_1 + \cdots + a_N \equiv 0 \bmod{m}$. Given such data, let $\tilde{M} = P^N\setminus \text{(diagonals)}$ be the variety of ordered $N$-tuples of distinct points on~$P$. There exists a smooth family of curves $\tilde{f}\colon \tilde{C} \to \tilde{M}$ whose fibre over $\lambda = (\lambda_1,\ldots,\lambda_N)$ with all $\lambda_i$ in~$\mA^1$ is the curve~$\tilde{C}_\lambda$ given by
\[
y^m = \prod_{i=1}^N\; (x-\lambda_i)^{a_i}\, .
\]
Let $\tilde{\alpha}$ be the automorphism of~$\tilde{C}$ over~$\tilde{M}$ given by $(x,y) \mapsto (x,\zeta_m \cdot y)$, with $\zeta_m = \exp(2\pi i/m)$.

The group $\Aut(P) \cong \PGL_2(\mC)$ acts freely on~$\tilde{M}$. This action lifts to an action on~$\tilde{C}$ that commutes with~$\tilde{\alpha}$. Dividing out by~$\Aut(P)$ we obtain a smooth family of curves
\[
f \colon C \to M
\]
and an automorphism $\alpha \in \Aut(C/M)$ of order~$m$. We have $\dim(M) = N-3$ and if $g$ is the genus of the curves~$C_\lambda$, the classifying morphism $M \to \cM_g$ is quasi-finite. 

If $d|m$ then the curve~$C_{d,\lambda}$ given by $y^d = \prod (x-\lambda_i)^{a_i}$ is  a quotient of~$C_\lambda$. The new part~$\cH^\new$ of the variation of Hodge structure $\cH = R^1f_*\mQ$ corresponds with an isogeny factor~$J^\new$ of the relative Jacobian~$J$, whose fibres $J_\lambda^\new$ are obtained as quotient of $J_\lambda = J(C_\lambda)$ by the abelian subvariety generated by the images of the $J(C_{d,\lambda}) \to J_\lambda$ for $d$ a proper divisor of~$m$.

\subsection{}
\label{ssec:CurveInvars}
The genus $g = \dim(J_\lambda)$ of the curves~$C_\lambda$ and the dimension of the new part $g^\new = \dim(J_\lambda^\new)$ are given by
\[
g = 1 + \frac{(N-2)m - \sum_{i=1}^N\, \gcd(m,a_i)}{2}
\qquad\text{and}\qquad
g^\new = \frac{(N-2)\cdot \varphi(m)}{2}\, .
\]
For $k \geq 1$, define $m_C^k(j)$ to be the multiplicity of $\zeta_m^j$ as an eigenvalue of~$\alpha^*$ acting on $H^0(C_\lambda,\omega^{\otimes k})$. The Chevalley-Weil formula gives
\begin{equation}
\label{eq:CW}
m_C^k(j) = \delta + \sum_{i=1}^N \left\langle \frac{(k-1) \cdot \gcd(a_i,m) - j\cdot a_i}{m} \right\rangle - \left(\frac{(k-1) \cdot \gcd(a_i,m)}{m} \right)\, ,
\end{equation}
where 
\[
\delta = (1-N) + (N-2)k + 
\begin{cases}
1 & \text{if $k=1$ and $j \equiv 0 \bmod{m}$,}\\
0 & \text{otherwise.}
\end{cases}
\]
(Recall from~\ref{ssec:notat} that~$\langle x\rangle$ denotes the fractional part of a number~$x$.) We write $m_C(j)$ for~$m^1_C(j)$; note that for $k=1$, \eqref{eq:CW} simplifies to
\begin{equation}
\label{eq:mC(j)}
m_C(j) = -1 + \sum_{i=1}^N\, \left\langle\frac{-ja_i}{m}\right\rangle \quad + \begin{cases}
1 & \text{if $j \equiv 0 \bmod{m}$,}\\
0 & \text{otherwise.}
\end{cases}
\end{equation}

\subsection{}
\label{ssec:mNaBall}
As input for the examples that we want to construct, we need data $(m,N,a)$ as in~\ref{ssec:mNa} such that the period domain for the variation~$\cH^\new$ is a complex $(N-3)$-ball. In the examples we will consider, $N \geq 4$ and $m\geq 3$, so that $E = \mQ(\zeta_m)$ is a CM field. With notation as in~\ref{ssec:mNa}, choose $\lambda \in M$ and let $H = H^1(C_\lambda,\mQ)$ be the fibre of~$\cH$ at~$\lambda$. If $\phi \colon H \otimes H \to \mQ(-1)$ denotes the polarization form, there is a unique skew-hermitian form~$\psi$ on the $E$-vector space~$H^\new$ such that $\trace_{E/\mQ} \circ \psi \colon H^\new \times H^\new \to \mQ$ equals the restriction of $(2\pi i) \cdot \phi$ to~$H^\new$. The Mumford-Tate group of~$H^\new$ is contained in the unitary group $\UU_E(H^\new,\psi)$. (See~\ref{ssec:notat} for the notation.)

We are interested in those cases where, abbreviating $\UU = \UU_E(H^\new,\psi)$, the associated real group is of the form
\[
\UU_\mR \cong \UU(1,N-3) \times \text{(compact factors)}\, .
\]
This happens if and only if there is a $j_0 \in (\mZ/m\mZ)^*$ such that $m_C(j_0) = 1$ and $m_C(j) \in \{0,N-2\}$ for all $j \neq \pm j_0$ in $(\mZ/m\mZ)^*$. Note that $m_C(j) + m_C(-j) = N-2$ for all $j \in (\mZ/m\mZ)^*$, in accordance with the fact that $\dim_E(H^\new) = N-2$.

Up to isomorphism, the family of curves $f\colon C \to M$ that we obtain from data $(m,N,a)$ does not change if we permute the~$a_i$ or if we change $a = (a_1,\ldots,a_N)$ by an element of $(\mZ/m\mZ)^*$. If the above condition is satisfied we may therefore assume that $j_0 = m-1$, which means that $\sum a_i = 2m$; additionally we may assume that $a = (a_1,\ldots,a_N)$ with $1 \leq a_1 \leq \cdots \leq a_N < m$. In this case, 
\begin{align}
\label{eq:mCBall}
\begin{split}
&m_C(-1) = 1\, ,\\
&m_C(1) = N-3\, ,\\
&m_C(j) \in \{0,N-2\}\ \text{for all $j \neq \pm 1$ in $(\mZ/m\mZ)^*$.}
\end{split}
\end{align}

Assuming now we are in this situation, let $Y \subset \Hom(\mS,\UU_\mR)$ be the $\UU(\mR)$-conjugacy class of the homomorphism $h \colon \mS \to \UU_\mR$ that defines the Hodge structure on~$H^\new$. Then $Y$ is a hermitian symmetric domain of non-compact type, and as such it is isomorphic to the complex $(N-3)$-ball. The pair $(\UU,Y)$ is a Shimura datum.

With $H^\new_\mZ = H^1(C_\lambda,\mZ) \cap H^\new$, let $\mL \subset \UU(\Af)$ be the compact open subgroup of elements that stabilize the lattice $H^\new_\mZ \otimes \Zhat$. Let $\Sh_\mL\bigl(\UU,Y\bigr)$ be the associated complex Shimura variety. (See Remark~\ref{rem:ShKStack}.) The tautological representation $\UU \to \GL(H^\new)$ gives rise to a variation of Hodge structure~$\cW$ over~$\Sh_\mL\bigl(\UU,Y\bigr)$. There is a well-defined morphism 
\begin{equation}
\label{eq:morpheta}
\eta \colon M \to \Sh_\mL\bigl(\UU,Y\bigr)
\end{equation}
such that $\cH^\new \cong \eta^* \cW$.   

The following is an immediate consequence of \cite{DelMost}, Prop.~3.9.

\begin{proposition}
\label{prop:DM3.9}
The morphism \eqref{eq:morpheta} is quasi-finite and dominant onto an irreducible component of $\Sh_\mL\bigl(\UU,Y\bigr)$.
\end{proposition}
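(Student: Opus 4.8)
The plan is to reduce the statement to Deligne--Mostow's analysis of the period map for the family $\tilde f\colon \tilde C \to \tilde M$ of cyclic covers, as recorded in \cite{DelMost}, \S3. First I would recall the setup there: Deligne and Mostow study the variation of Hodge structure on $\cH^\new = R^1f_*\mQ$ (equivalently on $H^1$ of the cyclic covers, cut out by the eigenspace for a primitive character of $\mZ/m\mZ$), show that its period domain is the $(N-3)$-ball $Y$ under the numerical hypothesis \eqref{eq:mCBall}, and prove that the resulting multivalued period map on $\tilde M$ is, after passing to the $\PGL_2(\mC)$-quotient $M$, a local isomorphism onto its image in the ball. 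Their Proposition~3.9 is precisely the assertion that, upon taking the quotient by the relevant monodromy/arithmetic group, one obtains a map to the Shimura variety which is étale (hence quasi-finite) and whose image is open, hence dominant onto a connected component.

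The steps I would carry out are: (1) Identify the morphism $\eta$ of \eqref{eq:morpheta} with the map considered by Deligne--Mostow. This requires checking that the Shimura datum $(\UU,Y)$ built from the skew-hermitian form $\psi$ on $H^\new$, and the level structure $\mL$ coming from the lattice $H^\new_\mZ \otimes \Zhat$, match their description of the target; the Hodge-theoretic content is already in \ref{ssec:mNaBall}, so this is essentially bookkeeping about the arithmetic group. (2) Quasi-finiteness: the differential of $\eta$ is injective because, by the Chevalley--Weil computation \eqref{eq:mCBall}, the Kodaira--Spencer map of $\cH^\new$ is an isomorphism onto the tangent space of the ball --- this is the infinitesimal Torelli statement underlying \cite{DelMost}, Prop.~3.9, and $\dim M = N-3 = \dim Y$. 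Since $\eta$ is a morphism of algebraic varieties of the same dimension whose derivative is everywhere an isomorphism, it is quasi-finite. (3) Dominance onto a component: since $\eta$ is étale, its image is open in $\Sh_\mL(\UU,Y)$; as $M$ is irreducible, the image lies in a single irreducible component, and being open and (after taking closure) closed in that component --- which is irreducible --- it is dense there. Alternatively, one invokes directly that \cite{DelMost}, Prop.~3.9 gives surjectivity of the period map onto (a component of) the arithmetic quotient.

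The main obstacle, and the only point requiring genuine care rather than citation, is step~(1): reconciling the normalizations. One must make sure that the factor $2\pi i$ in the definition of $\psi$, the choice of CM type, and especially the precise arithmetic subgroup (Deligne--Mostow work with a specific congruence or monodromy group, whereas here $\mL$ is the stabilizer of an integral lattice) are compatible, so that "irreducible component of $\Sh_\mL(\UU,Y)$" in Proposition~\ref{prop:DM3.9} really is the image and not merely a finite cover or quotient of it. Once the target Shimura variety is correctly matched up to the level structure --- and since for the qualitative conclusion (quasi-finite, dominant onto a component) the exact level is immaterial, as in Remark~\ref{rem:ShKStack} --- the statement follows immediately from \cite{DelMost}, Prop.~3.9.
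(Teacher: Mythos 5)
Your proposal is correct and takes the same route as the paper, which simply cites Deligne--Mostow Prop.~3.9 and regards the statement as an immediate consequence of it. Your elaboration of the reduction (identifying $\eta$ with their period map, étaleness giving quasi-finiteness and openness of the image, hence dominance onto a component, with the only real work being the bookkeeping of normalizations and level structure) correctly fills in what the paper leaves implicit.
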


Let us now make the connection with what was explained in Section~\ref{sec:Twisting}. Note that if we apply the construction of~\ref{ssec:mNa} to data $(m,3,b)$, this gives rise to a single curve~$D$. Concretely, it is the curve given by
\[
u^m = t^{b_1} (t-1)^{b_2}
\]
with automorphism~$\beta$ given by $(t,u) \mapsto (t,\zeta_m \cdot u)$.

\begin{proposition}
\label{prop:mNabMatch}
Let $(m,N,a)$ be data as in\/~\textup{\ref{ssec:mNa}} with $N \geq 4$, such that \eqref{eq:mCBall} holds. Let $f\colon C \to M$ be the associated family of curves, on which we have an automorphism~$\alpha$ of order~$m$. 
\begin{enumerate}[label=\textup{(\roman*)}]
\item Let $D$ be a curve with an automorphism~$\beta$ of order~$m$, such that the second cohomology of the surfaces
\begin{equation}
\label{eq:TlambdaDef}
T_\lambda = (C_\lambda \times D)/ \bigl\langle(\alpha,\beta)\bigr\rangle
\end{equation}
is of K3 type. Then $(D,\beta)$ is obtained, by the construction of\/~\textup{\ref{ssec:mNa}}, from data $(m,3,b)$, where $b = (b_1,b_2,b_3)$ is a triple with $1 \leq b_1 \leq b_2 \leq b_3 < m$ such that $\gcd(m,b_1,b_2,b_3) = 1$, and such that
\begin{align}
\label{abmatch}
\begin{split}
&b_1 + b_2 + b_3 = m\, , \text{and}\\
&\text{if $j \not\equiv \pm 1 \bmod{m}$ and $m_C(j) > 0$ then $\Bigl\langle \frac{j\cdot b_1}{m}\Bigr\rangle + \Bigl\langle \frac{j\cdot b_2}{m}\Bigr\rangle + \Bigl\langle \frac{j\cdot b_3}{m}\Bigr\rangle = 1$.}
\end{split}
\end{align}

\item Conversely, let $b = (b_1,b_2,b_3)$ be a triple satisfying the above conditions, let $(D,\beta)$ be the associated curve with an automorphism of order~$m$, and define~$T_\lambda$ by~\eqref{eq:TlambdaDef}. Then $H^2(T_\lambda,\mQ)\bigl(1\bigr)$ is of K3 type, and we have $H^2(T_\lambda,\mQ)\bigl(1\bigr) \cong V_0 \oplus V_1$, where $V_0$ is purely of type~$(0,0)$ and 
\[
V_1 \cong \bigl[H^{1,\new}(D,\mQ) \otimes_{\mQ(\zeta_m)} H^{1,\new}(C_\lambda,\mQ)\bigr](1) \cong \ul\Hom_{\mQ(\zeta_m)}\bigl(H_1^\new(D,\mQ),H_1^\new(C_\lambda,\mQ)\bigr)\, .
\]
\end{enumerate}
\end{proposition}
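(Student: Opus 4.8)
The plan is to compute $H^2(T_\lambda,\mQ)$ via the usual formula for the cohomology of a quotient by a finite group, namely $H^2(T_\lambda,\mQ) = H^2\bigl((C_\lambda\times D)/\langle(\alpha,\beta)\rangle,\mQ\bigr) = H^2(C_\lambda\times D,\mQ)^{\langle(\alpha,\beta)\rangle}$, where we must be slightly careful because $T_\lambda$ is only a non-singular model: the quotient $(C_\lambda\times D)/\langle(\alpha,\beta)\rangle$ has at worst quotient (hence rational) singularities coming from the fixed points of the diagonal action, and resolving them only adds classes of type $(1,1)$, which can be absorbed into the summand $V_0$. So the first step is to record this reduction and note that it suffices to analyze the $\langle(\alpha,\beta)\rangle$-invariant part of $H^2(C_\lambda\times D,\mQ)$ as a Hodge structure, together with the extra $(1,1)$-classes from the resolution.

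Next I would apply the Künneth decomposition
\[
H^2(C_\lambda\times D,\mQ) = \bigl(H^2(C_\lambda)\otimes H^0(D)\bigr)\oplus\bigl(H^1(C_\lambda)\otimes H^1(D)\bigr)\oplus\bigl(H^0(C_\lambda)\otimes H^2(D)\bigr),
\]
and take $\langle(\alpha,\beta)\rangle$-invariants term by term. The outer two summands are one-dimensional of type $(1,1)$ and are fixed by the group action (since $\alpha$, $\beta$ act trivially on $H^0$ and $H^2$ of a curve), so they contribute to $V_0$. The interesting summand is $H^1(C_\lambda)\otimes H^1(D)$. Decomposing each factor into isotypic pieces for the cyclic action — using the notation $\cH = \oplus_{d\mid m}\cH^{[d]}$ of~\ref{ssec:oldnew} — the group $\langle(\alpha,\beta)\rangle\cong\mZ/m\mZ$ acts on $H^1(C_\lambda)(\zeta_m^{-j})\otimes H^1(D)(\zeta_m^{j})$-type pieces, and the invariants pick out exactly the part where the $\alpha$-eigenvalue and $\beta$-eigenvalue are inverse to each other. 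Here I would split off the contributions from proper divisors $d\mid m$: for $d<m$ these give pieces on which $H^1(D)$ acts through $\mQ(\zeta_d)$, and I claim — this is where the matching condition~\eqref{abmatch} is used — that under the hypothesis $b_1+b_2+b_3=m$ together with the second condition in~\eqref{abmatch}, all such ``old $\times$ old'' contributions are purely of type $(1,1)$ (or vanish), hence again land in $V_0$; one uses the Chevalley-Weil formula~\eqref{eq:mC(j)} for $D$ (with $N=3$) to see that $H^{1,\new}(C_{d,\lambda})$-type pieces paired with the corresponding $D$-pieces have the $(1,-1)$ and $(-1,1)$ parts cancelling. What survives with a nontrivial Hodge structure is precisely $\bigl[H^{1,\new}(D)\otimes_{\mQ(\zeta_m)}H^{1,\new}(C_\lambda)\bigr]$, where the tensor product over $\mQ(\zeta_m)$ (rather than over $\mQ$) arises exactly because taking $\mZ/m\mZ$-invariants of $H^{1,\new}(C_\lambda)\otimes_\mQ H^{1,\new}(D)$ — on which the diagonal generator acts by $\zeta_m\otimes\zeta_m^{-1}$ composed with the Galois-twisted structure — amounts to descending the $\mQ(\zeta_m)\otimes_\mQ\mQ(\zeta_m)$-module to its diagonal component, i.e.\ to $\otimes_{\mQ(\zeta_m)}$.

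Then I would identify the Hodge type of this surviving piece. By~\eqref{eq:mCBall}, $H^{1,\new}(C_\lambda)$ is of type $\KKK(E,\tau)$ for a suitable $\tau$ (with $d=\dim_E = N-2$), and $H^{1,\new}(D)$ is $1$-dimensional over $E=\mQ(\zeta_m)$, hence of the form $L_\Psi$ for a CM type $\Psi$ determined by $b$ via~\eqref{eq:mC(j)}. The first matching condition $b_1+b_2+b_3=m$ and the second condition in~\eqref{abmatch} are precisely what force $\Psi$ to be a CM type and to be ``compatible'' with $\tau$ in the sense that $L_\Psi\otimes_E H^{1,\new}(C_\lambda)$ is again of K3 type — concretely, one checks the multiplication type via the product formula $h_{V\otimes_E W}(\sigma;p,q)=\sum h_V h_W$ and sees that it has Hodge numbers $1,n,1$. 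After a Tate twist this gives the summand $V_1$ of~\eqref{eq:TlambdaDef}, and the second isomorphism $V_1\cong\ul\Hom_{\mQ(\zeta_m)}(H_1^\new(D),H_1^\new(C_\lambda))$ follows from Proposition~\ref{prop:K3AVcorr} (or rather its proof: $L_\Phi\otimes_E V\cong\ul\Hom_E(\ul\Hom_E(L_\Phi,\mQ),\dots)$-type bookkeeping, using $\ul\End_E(L_\Phi)\cong E(0)$ and the self-duality up to twist of $H^1$ of a curve). Collecting: $H^2(T_\lambda,\mQ)(1)=V_0\oplus V_1$ with $V_0$ of type $(0,0)$ and $V_1$ of K3 type, as claimed; part~(i) is then the converse bookkeeping — one shows that \emph{any} $(D,\beta)$ making $H^2(T_\lambda)$ of K3 type must have $D$ a $\mZ/m\mZ$-cover of $\mP^1$ branched at $3$ points (otherwise $H^{1,\new}(D)$ has the wrong multiplication type to produce something of K3 type when twisting), and that the branch data $b$ must satisfy~\eqref{abmatch} for the Hodge numbers to come out $1,n,1$.

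The main obstacle I expect is the careful bookkeeping in the ``old part'' of the middle Künneth summand: one must verify that \emph{every} contribution coming from proper divisors $d\mid m$, and from the pieces of $H^{1,\new}(C_\lambda)$ with eigenvalue $j\not\equiv\pm1$, is annihilated or becomes of type $(1,1)$ under the invariants — and this is exactly the content of the second line of~\eqref{abmatch}. Getting the fractional-part inequalities from Chevalley-Weil to line up correctly (the condition $\langle jb_1/m\rangle+\langle jb_2/m\rangle+\langle jb_3/m\rangle=1$ rather than $2$) is the delicate point, together with keeping straight which embedding $\sigma$ indexes which eigenvalue $j$ under the identification $\Sigma\cong(\mZ/m\mZ)^*$. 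The resolution-of-singularities contribution is routine (rational singularities, exceptional divisors are $(1,1)$), and the Tannakian/multiplication-type computation is a direct application of the formulas in Section~\ref{sec:Twisting}.
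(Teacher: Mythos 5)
Your proposal is correct and takes essentially the same route as the paper: reduce via $H^2(T_\lambda,\mQ) \cong H^2(C_\lambda\times D,\mQ)^G$ to the middle Künneth summand, note that the $G$-invariants of $H^{1,[d]}(C_\lambda)\otimes H^{1,[e]}(D)$ vanish unless $d=e$, use the Chevalley–Weil counting $\sum_j m_C(j)\,m_D(-j)$ to read off $h^{2,0}$, and identify the surviving new-new piece as $V_1$, while part~(i) runs the same count in reverse (first forcing $g(D/\langle\beta\rangle)=0$, then $N'=3$, then the fractional-part identity of~\eqref{abmatch}). One small correction to your framing: $T_\lambda$ in~\eqref{eq:TlambdaDef} \emph{is} the singular quotient, not a non-singular model, so no resolution enters at this stage — the exceptional-divisor classes you mention only become relevant for $S_\lambda\to T_\lambda$ in Theorem~\ref{thm:TsGiveSpecial} and its sequel, not in this proposition.
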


\begin{proof}
(\romannumeral1) Let $G \subset \Aut(C_\lambda \times D)$ denote the subgroup generated by the automorphism $(\alpha,\beta)$ of order~$m$. We have 
\[
H^2(T_\lambda,\mQ) \cong H^2(C_\lambda\times D,\mQ)^G \cong \mQ(-1)^{\oplus 2} \oplus \bigl[H^1(C_\lambda,\mQ) \otimes_\mQ H^1(D,\mQ)\bigr]^G\, .
\]
Similar to the notation we used for the curves~$C_\lambda$, let $m_D(j)$ denote the multiplicity of~$\zeta_m^j$ as an eigenvalue of~$\beta^*$ on $H^0(D,\omega_D)$. The condition that $H^2(T_\lambda,\mQ)\bigl(1\bigr)$ be of K3 type means that there is a unique $j_0 \in \mZ/m\mZ$ for which $m_C(j_0) \cdot m_D(-j_0) = 1$, and that $m_C(j) \cdot m_D(-j) = 0$ for all other~$j$.

If $H^2(T_\lambda,\mQ)\bigl(1\bigr)$ is of K3 type then necessarily $D/\langle\beta\rangle \cong \mP^1$. Indeed, if $g\bigl(D/\langle\beta\rangle\bigr) > 0$ then the Chevalley-Weil formula gives that $m_D(j) > 0$ for all $j \in (\mZ/m\mZ)^*$. But also, our assumption that $N \geq 4$ implies that $m_C(\pm 1) > 0$; so we arrive at a contradiction with the assumption that $h^{2,0}(T_\lambda) = 1$. The curve~$D$ is therefore given by data $(m,N^\prime,b)$ as in~\ref{ssec:mNa}. In this case, $m_D(-1) + m_D(1) = (N^\prime-2)$. Further, one of the numbers $m_C(-1) \cdot m_D(1) = m_D(1)$ and $m_C(1) \cdot m_D(-1) = (N-3) \cdot m_D(-1)$ equals~$1$ and the other equals~$0$. This implies that $N^\prime = 3$; further, if $N \geq 5$ we must have $m_D(1) = 1$ and $m_D(-1) = 0$, whereas for $N=4$ also $m_D(1) = 0$ and $m_D(-1) = 1$ is possible. As we can still change~$b$ by an element of $(\mZ/m\mZ)^*$, we may in either case assume that $m_D(1) = 1$ and $m_D(-1) = 0$, which is equivalent to the first condition in~\eqref{abmatch}. As now $m_C(-1) \cdot m_D(1) = 1$, there can be no $j \not\equiv -1 \bmod{m}$ for which $m_C(j) \cdot m_D(-j) > 0$; this gives the second condition in~\eqref{abmatch}. 

(\romannumeral2) If a triple~$b$ satisfies~\eqref{abmatch} then for the curve~$D$ associated with the data $(m,3,b)$ we have 
\[
m_C(j) \cdot m_D(-j) = \begin{cases} 1 & \text{if $j \equiv -1 \bmod{m}$}\\ 0 & \text{otherwise} \end{cases}
\]
and therefore $h^{2,0}(T_\lambda) =1$, i.e., $H^2(T_\lambda,\mQ)\bigl(1\bigr)$ is of K3 type. We have $H^2(T_\lambda,\mQ)\bigl(1\bigr) \cong V_0 \oplus V_1$ with
\[
V_0 = \mQ(0)^{\oplus 2} \oplus \bigl[H^{1,\old}(D,\mQ) \otimes_\mQ H^{1,\old}(C_\lambda,\mQ)\bigr]^G(1)\, ,
\]
which by construction is purely of type $(0,0)$, and
\[
V_1 \cong  \bigl[H^{1,\new}(D,\mQ) \otimes_\mQ H^{1,\new}(C_\lambda,\mQ)\bigr]^G(1)\cong
\bigl[H^{1,\new}(D,\mQ) \otimes_{\mQ(\zeta_m)} H^\new(C_\lambda,\mQ)\bigr](1)\, .
\]
Using duality, this last term can be rewritten as $\ul\Hom_{\mQ(\zeta_m)}\bigl(H_1^\new(D,\mQ),H_1^\new(C_\lambda,\mQ)\bigr)$.
\end{proof}

\subsection{}
\label{ssec:mNab}
Summing up, the examples we are interested in are obtained from data $(m,N,a,b)$, where:
\begin{enumerate}
\item  $(m,N,a)$ is a triple as in~\ref{ssec:mNa} with $m \geq 3$ and $N \geq 4$ such that $1 \leq a_1 \leq \cdots \leq a_N < m$ and such that \eqref{eq:mCBall} is satisfied; in particular, $m_C(-1) = 1$ means that $\sum a_i = 2m$;
\item $b = (b_1,b_2,b_3)$ with $1 \leq b_1 \leq b_2 \leq b_3 < m$ such that $\gcd(m,b_1,b_2,b_3) = 1$ and such that \eqref{abmatch} is satisfied.
\end{enumerate}
Given such data we have a family of curves $f \colon C \to M$ with an automorphism~$\alpha$ of order~$m$, and a curve~$D$ with an automorphism~$\beta$ of order~$m$. We refer to~$D$ as the ``twisting factor''; as the proof of the next result shows, $H_1(D,\mQ)$, with its action of $E = \mQ(\zeta_m)$, plays the role of~$L_\Phi$ in Proposition~\ref{prop:K3AVcorr}.

\begin{theorem}
\label{thm:TsGiveSpecial}
Let $(m,N,a,b)$ be data as in\/~\textup{\ref{ssec:mNab}}. For $\lambda \in M$, let $S_\lambda \to T_\lambda$ be a minimal resolution of singularities, and consider the family of surfaces $F \colon S \to M$ thus obtained. Choosing a base point in~$M$ and a polarization form~$\phi$ for the variation of Hodge structure $R^2F_*\mQ_S(1)$, let $\theta \colon M \to \Sh_\mK\bigl(\SO(\phi),X\bigr)$ be the associated morphism as in\/~\textup{\ref{ssec:famsurfS/B}}. Then $\theta$ is quasi-finite and the Zariski closure of its image is an $(N-3)$-dimensional special subvariety of $\Sh_\mK\bigl(\SO(\phi),X\bigr)$. By consequence, the set of $\lambda \in M$ for which $H^2(S_\lambda,\mQ)\bigl(1\bigr)$ is of CM type is analytically dense in~$M$.
\end{theorem}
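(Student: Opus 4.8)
The plan is to reduce the statement about the orthogonal period map $\theta$ to the statement about the unitary period map $\eta$ from Proposition~\ref{prop:DM3.9}, using the twisting correspondence of Section~\ref{sec:Twisting} as the bridge. By Proposition~\ref{prop:mNabMatch}(ii), the interesting part of $R^2F_*\mQ_S(1)$ is the variation $\cV_1$ whose fibre at $\lambda$ is
\[
\ul\Hom_{\mQ(\zeta_m)}\bigl(H_1^\new(D,\mQ),H_1^\new(C_\lambda,\mQ)\bigr),
\]
while $\cV_0$ is constant of type $(0,0)$. Since $H_1^\new(D,\mQ)$ with its $E$-action is, by the proof of Proposition~\ref{prop:mNabMatch}(i), exactly an $L_\Phi$ for a suitable CM type $\Phi$ of $E=\mQ(\zeta_m)$ (the condition \eqref{abmatch} is precisely what pins down $\Phi$ and the embedding $\tau$), Proposition~\ref{prop:K3AVcorr} and the subsequent remark give an equivalence of variations of Hodge structure between $\cV_1$ (of type $\KKK(E,\tau)$) and $\cH^\new$ (of type $\AbV(E,\Phi,\tau)$), with constant twisting factor. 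In particular the monodromy group of $\cV_1$ and that of $\cH^\new$ agree under the natural identification, and the $\mQ$-Zariski closures of the monodromy images coincide.

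Next I would use this to identify the connected monodromy group of $R^2F_*\mQ_S(1)$. The monodromy acts trivially on $\cV_0$, so it is the monodromy of $\cV_1$, which by the previous paragraph is the monodromy of $\cH^\new$. By Proposition~\ref{prop:DM3.9} the morphism $\eta\colon M\to \Sh_\mL(\UU,Y)$ is dominant onto a component, so the connected monodromy group of $\cH^\new$ is the full derived group $\UU^\der$ of the unitary group $\UU = \UU_E(H^\new,\psi)$, whose real points are $\UU(1,N-3)$ up to compact factors. Transporting back, the connected monodromy group $\mathrm{Mon}^\circ$ of $R^2F_*\mQ_S(1)$, viewed inside $\SO(\phi)$, is the image of this unitary derived group under the twist; in particular $\mathrm{Mon}^\circ$ is a semisimple group of Hermitian type whose symmetric space is an $(N-3)$-ball, and — this is the key structural input — it is a \emph{normal} subgroup of its own Mumford-Tate group at a very general point (in fact equal to the derived group of the generic Mumford-Tate group, since the generic MT group of $\cH^\new$ is contained in $\UU$ and contains $\mathrm{Mon}^\circ = \UU^\der$). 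One now invokes the standard criterion (as in the André-type results recalled in \cite{MO}): a closed irreducible subvariety of a Shimura variety through which a VHS with connected monodromy $\mathrm{Mon}^\circ$ factors, and for which $\mathrm{Mon}^\circ$ is normal in the generic Mumford-Tate group, has Zariski closure of its image equal to a special subvariety. This gives that $\overline{\theta(M)}$ is special, of dimension $N-3$.

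For quasi-finiteness of $\theta$: the fibres of $C\to M$ have the property that the classifying map $M\to \cM_g$ is quasi-finite (stated in~\ref{ssec:mNa}), and $\eta$ is quasi-finite by Proposition~\ref{prop:DM3.9}; since $\cV_1\cong\cH^\new$ up to the constant twist, $\theta$ differs from $\eta$ (composed with a finite morphism of Shimura varieties coming from $\UU\to\SO(\phi)$) by maps with finite fibres, hence is quasi-finite. Finally, the last sentence follows formally: a special subvariety of a Shimura variety contains an analytically dense set of special (CM) points, and the CM points of $\Sh_\mK(\SO(\phi),X)$ are precisely the $\lambda$ with $H^2(S_\lambda,\mQ)(1)$ of CM type; pulling back along the quasi-finite dominant map $M\to\overline{\theta(M)}$ keeps this density. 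The main obstacle I anticipate is the middle step: one must check carefully that the twist really does carry the \emph{monodromy} (not just the Hodge structure fibrewise) isomorphically — this is handled by the remark after Proposition~\ref{prop:K3AVcorr}, since $L_\Phi$ is constant — and, more delicately, that $\mathrm{Mon}^\circ$ is normal in the generic Mumford-Tate group of $R^2F_*\mQ_S(1)$, which requires knowing that adjoining the constant $\cV_0$ and the Tate-twist bookkeeping does not enlarge the MT group beyond the unitary/orthogonal group in a way that breaks normality; this is where the explicit description of $\UU_\mR$ as $\UU(1,N-3)\times(\text{compact})$ from~\ref{ssec:mNaBall} does the work.
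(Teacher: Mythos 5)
Your proposal is correct in spirit and uses all the right ingredients — the twist correspondence from Section~\ref{sec:Twisting}, the identification of $\cV_1$ with $\ul\Hom_E(L_\Phi,\cH^\new)$ from Proposition~\ref{prop:mNabMatch}(ii), the constancy of the twisting factor so that monodromy is preserved, and Proposition~\ref{prop:DM3.9} — but it takes a genuinely more roundabout route than the paper's. The paper does not pass through any monodromy/normality criterion at all: it directly upgrades the functor $\ul\Hom_E(L_\Phi,-)$ to a homomorphism of $\mQ$-algebraic groups $\UU = \UU_E(H^\new,\psi) \to \SO(V_1,\phi_1) \hookrightarrow \SO(V,\phi)$ (for a suitably chosen polarization $\phi=\phi_0\oplus\phi_1$ on $V=V_0\oplus V_1$), observes that this is a morphism of Shimura data $u\colon(\UU,Y)\to(\SO(\phi),X)$, and checks that $\theta=\Sh(u)\circ\eta$. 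Then Proposition~\ref{prop:DM3.9} immediately gives everything: $\eta$ is quasi-finite and dominant onto a component of $\Sh_\mL(\UU,Y)$, and $\Sh(u)$ sends that component finitely onto a special subvariety of $\Sh_\mK(\SO(\phi),X)$, of the correct dimension~$N-3$ since $Y$ is an $(N-3)$-ball.

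This direct route buys you two things that your argument handles only implicitly or leaves with small gaps. First, the monodromy-normal-in-generic-MT criterion, as usually stated (e.g.\ in~\cite{MO}), gives that the closure is \emph{weakly} special; to conclude that it is special you must also exhibit a CM point in the closure, which is dangerously close to what the last sentence of the theorem is asserting. You can repair this by noting that CM fibres of $\cH^\new$ produce CM fibres of $\cV_1$ (a Hom over $E$ of two CM Hodge structures with $E$-action is again CM), but that extra step becomes unnecessary once you have $\theta=\Sh(u)\circ\eta$, because the image of a connected component under a morphism of Shimura varieties is special by definition. Second, the issue you flag about adjoining the constant $\cV_0$ and the generic MT group of $L_\Phi$ — while it does work out (the MT group of $\cV_1$ lives in $T_\Phi\times\UU$ modulo the kernel of the action, with $T_\Phi$ a torus, so normality is not broken) — is simply absorbed in the paper into the choice of the orthogonal sum polarization $\phi=\phi_0\oplus\phi_1$ and the single group homomorphism $u$; you never need to reason separately about what happens to the generic MT group after adding the Tate-type piece~$V_0$. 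In short: your proof can be made to work, but constructing $u$ explicitly, as the paper does, eliminates both the weakly-special-versus-special subtlety and the bookkeeping with $V_0$ in one stroke.
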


\begin{proof}
Let $E = \mQ(\zeta_m)$, and consider the curve~$D$ as above. Let $\chi$ be the $E$-valued skew-hermitian form on $H_1^\new(D,\mQ)$ such that $\trace_{E/\mQ} \circ \chi$ is the natural polarization form. With notation as in~\ref{ssec:LPhi}, there is a unique CM type~$\Phi$ such that $H_1^\new(D,\mQ) \cong L_\Phi$  in~$\QHS_{(E)}$. To simplify notation we take this isomorphism as an identification. 

Choose a base point $\lambda \in M$, and write $H^\new = H^{1,\new}(C_\lambda,\mQ)$ as in \ref{ssec:mNaBall}. With $V_1$ as in~\ref{prop:mNabMatch}(\romannumeral2), we have $\ul\Hom_E(L_\Phi,H^\new) \isomarrow V_1$. We equip $V_1$ with the polarization form $\phi_1 = \trace_{E/\mQ} \circ (\chi^\vee \otimes \psi)$, and take as polarization on $V = H^2(T_\lambda,\mQ)\bigl(1\bigr) = V_0 \oplus V_1$ the orthogonal sum of~$\phi_1$ and a polarization form on~$V_0$. The composition
\[
\UU = \UU_E(H^\new,\psi) \xrightarrow{~\ul\Hom_E(L_\Phi,-)~} \SO(V_1,\phi_1) \longhookrightarrow \SO(V,\phi)
\]
is part of a morphism of Shimura data $u\colon (\UU,Y) \to \bigl(\SO(\phi),X\bigr)$ such that $\Sh(u) \circ \eta = \theta$ as morphisms $M \to \Sh_\mK\bigl(\SO(\phi),X\bigr)$. (Here $\eta$ is the morphism \eqref{eq:morpheta}.) The theorem now follows from Proposition~\ref{prop:DM3.9}.
\end{proof}

\subsection{}
Table~\ref{table:mNab} in Appendix~\ref{sec:appendix} contains a list of four-tuples $(m,N,a,b)$ that satisfy the conditions in~\ref{ssec:mNab}. We shall discuss there the relation with the work of Deligne and Mostow in \cite{DelMost} and~\cite{Mostow}. All these examples give rise to families $S \to M$ of surfaces that, by the theorem just proved, provide a positive answer to Question~\ref{question:B}.
  
It remains to analyze how these examples fit into the classification of surfaces. The next theorem gives the result. The details of the proof are given in the next section.

\begin{theorem}
\label{thm:MainThm}
Let $(m,N,a,b)$ be data as listed in\/~\textup{Table~\ref{table:mNab}}, and let $S \to M$ be the associated family of surfaces. For $\lambda \in M$ let $S_\lambda^{\min}$ be the minimal model of~$S_\lambda$. Note that, by construction, $q(S_\lambda^{\min}) = 0$ and $p_\geom(S_\lambda^{\min}) = 1$.

\begin{enumerate}[label=\textup{(\roman*)}]
\item In the examples numbered \textup{42}, \textup{46}, \textup{95} and~\textup{122} the surfaces $S_\lambda^{\min}$ are of general type with $K^2 = 1$.
\item In example \textup{59} the surfaces $S_\lambda^{\min}$ are of general type with $K^2 = 2$.
\item In example~\textup{60} the surfaces $S_\lambda^{\min}$ are of general type with $K^2 = 3$.
\item In all other examples the surfaces $S_\lambda^{\min}$ are K3 surfaces. 
\end{enumerate}
\end{theorem}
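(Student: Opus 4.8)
The statement is a purely numerical classification result: for each of the finitely many tuples $(m,N,a,b)$ in Table~\ref{table:mNab}, we must compute enough invariants of the minimal model $S_\lambda^{\min}$ to determine its place in the Enriques--Kodaira classification. Since we already know $q(S_\lambda^{\min})=0$ and $p_\geom(S_\lambda^{\min})=1$ for all of them, a surface in our list is either a K3 surface or else has Kodaira dimension~$2$ (the cases $\kappa=0$ with $p_\geom=1,q=0$ being exactly K3; $\kappa=1$ would force a genus-one fibration whose numerics we can rule out, and $\kappa=-\infty$ is excluded by $p_\geom=1$). So the whole theorem reduces to deciding, tuple by tuple, whether $S_\lambda^{\min}$ is K3, and if not, computing $K_{S_\lambda^{\min}}^2$.

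First I would set up the general machinery for a product-quotient surface $T_\lambda=(C_\lambda\times D)/G$ with $G\cong\mZ/m\mZ$ acting diagonally via $(\alpha,\beta)$. The key inputs are: (1) the fixed-point data of $(\alpha,\beta)$ on $C_\lambda\times D$, which is controlled by the ramification data $a$ and $b$ of the two cyclic covers — a point $(x_0,u_0)$ is fixed by $(\alpha^k,\beta^k)$ exactly when $x_0$ is a branch point of $C_\lambda\to\mP^1$ and $u_0$ is a branch point of $D\to\mP^1$ with matching local rotation numbers, and the singularities of $T_\lambda$ are cyclic quotient singularities whose types are read off from those rotation numbers; (2) the numerical invariants of $T_\lambda$ itself, obtained from $\chi(\mathscr O)$, $K^2$, $e$ of the product $C_\lambda\times D$ divided by $m$, corrected by the contributions of the quotient singularities. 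Concretely I would use $K_{T_\lambda}^2 = \frac{(2g(C_\lambda)-2)(2g(D)-2)}{m} + (\text{correction from singular points})$ and then pass from $T_\lambda$ to its minimal resolution $S_\lambda$ and then to the minimal model $S_\lambda^{\min}$ by resolving each cyclic quotient singularity (its resolution graph, discrepancies, and the effect on $K^2$ and on $\chi(\mathscr O)$ are all standard and depend only on the singularity type) and then contracting any $(-1)$-curves that appear. The genera $g(C_\lambda)$ and $g(D)$ and the multiplicities $m_C(j),m_D(j)$ are all computed from $(m,N,a)$ and $(m,3,b)$ by the formulas in~\ref{ssec:CurveInvars} (equations~\eqref{eq:CW} and~\eqref{eq:mC(j)}), so everything is explicit.

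With this framework in place, the argument runs as follows. For a generic tuple I expect $T_\lambda$ to have only $A$-type (or more general cyclic) quotient singularities of a kind whose minimal resolution is already minimal and is a K3 surface — this is checked by confirming $K_{S_\lambda}^2=0$, $\chi(\mathscr O_{S_\lambda})=2$, and $q=0$, which forces K3 (the only alternative with these invariants, an abelian surface, is excluded by $q=0$). For the six exceptional tuples $42,46,59,60,95,122$ one finds instead that after resolving and contracting $(-1)$-curves one lands on a surface with $K^2>0$, hence of general type (it cannot be rational or ruled since $p_\geom=1$), and the precise value $K^2\in\{1,2,3\}$ comes out of the bookkeeping above. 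Since the list is finite, this is in principle a finite check; in practice one organizes it by first computing $g(C_\lambda),g(D)$ and the singularity types for each tuple, then applying a uniform resolution-and-contraction formula.

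**The main obstacle.** The conceptual content is modest; the real work — and the place where mistakes are easy — is the singularity bookkeeping: correctly enumerating the fixed points of each $(\alpha^k,\beta^k)$ on $C_\lambda\times D$, identifying the analytic type of each resulting cyclic quotient singularity of $T_\lambda$ from the pair of local rotation numbers, writing down its minimal resolution (Hirzebruch--Jung string), and tracking the combined effect on $K^2$ and $\chi(\mathscr O)$ through the subsequent contraction of $(-1)$-curves. Getting the six exceptional families to come out with exactly the claimed $K^2$, rather than off by the contribution of a single exceptional curve, is where all the care must go; the rest is an application of the classification of surfaces once the numbers $(K^2,\chi,q,p_\geom)$ are known.
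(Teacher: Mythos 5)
Your overall strategy matches the paper's: set up the product-quotient $T_\lambda=(C_\lambda\times D)/G$, compute the cyclic-quotient singularities from the ramification data, use the $K^2$-formula for the minimal resolution $S_\lambda$, then track blow-downs of $(-1)$-curves to reach $S_\lambda^{\min}$, and finally classify by the resulting invariants. The paper does exactly this (organizing the bookkeeping via equation~\eqref{eq:KS2} from~\cite{MistPol} and a finite computer search); so the combinatorial/numerical part of your plan is sound and agrees with the paper.

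The genuine gap is in your final classification step. You assert that a minimal surface with $q=0$, $p_\geom=1$, $\chi(\cO)=2$ and $K^2=0$ ``forces K3 (the only alternative with these invariants, an abelian surface, is excluded by $q=0$),'' and earlier you say that $\kappa=1$ ``would force a genus-one fibration whose numerics we can rule out.'' Neither claim is justified, and the first is false as stated: an abelian surface has $q=2$ and was never in the running, whereas a minimal properly elliptic surface can perfectly well have $q=0$, $p_\geom=1$, $K^2=0$, $\chi=2$ --- this combination of numerical invariants does not distinguish K3 from $\kappa=1$. Ruling out $\kappa=1$ genuinely requires using the product-quotient geometry, not just the numbers. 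The paper proves this as a separate proposition: if $S^{\min}$ were properly elliptic, the generic elliptic fibre $F_b$ would pull back to a curve $\hat F_b\subset S$ with $\hat F_b\cdot\cK_S=0$; since the components of $\cK_S$ contracted by $S\to S^{\min}$ are all rational and $\hat F_b$ has genus~$1$, $\hat F_b$ would have to be disjoint from $\cK_S$, which is impossible because $\cK_S$ always contains both ``horizontal'' ($Z_j$) and ``vertical'' ($Y_i$) fibre components with positive multiplicity. Without some argument of this kind your case~(iv) is incomplete; the numerical check alone cannot finish the proof.
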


\section{Analyzing the examples}

The only difficulty in proving Theorem~\ref{thm:MainThm} comes from the fact that the surfaces~$S_\lambda$ are (usually) not minimal, and that we have to find which divisors can be blown down. In any given example this can be analyzed by hand; as this is rather cumbersome we use a computer program to do some of the work.

We start by showing what happens in two concrete examples; this may help the reader to understand what is going on. After that we give some general facts about the surfaces~$S_\lambda$, and we explain the proof of Thm.~\ref{thm:MainThm}.

\begin{notation}
\label{ssec:KXnotat}
If $X$ is a non-singular surface with $p_\geom(X) = 1$, we denote by~$\cK_X$ the unique canonical effective divisor on~$X$.
\end{notation}

\begin{example}
\label{exa:ElliptK3}
We start with a fairly simple example. We will not give full details for all calculations, as these are special cases of the general results discussed later in this section.

Take $N=8$ and $m=4$ with $a=(1,1,1,1,1,1,1,1)$ and $b = (1,1,2)$. This is example~138 in Table~\ref{table:mNab}. The genus~$9$ curves~$C_\lambda$ are given by $y^4 = \prod_{i=1}^8\, (x-\lambda_i)$, with automorphism~$\alpha$ of order~$4$ given by $(x,y) \mapsto (x,\zeta_4 \cdot y)$. The twisting factor~$D$ is the elliptic curve with $j=1728$ given by $u^4 = t(t-1)$, with automorphism~$\beta$ given by $(t,u) \mapsto (t,\zeta_4\cdot u)$. 

Consider the morphism $x \colon C_\lambda \to \mP^1$. Above $\lambda_i$ we have a unique point $P_i \in C_\lambda$. Similarly, let $\mu_1 = 0$, $\mu_2 = 1$ and $\mu_3 = \infty$, and consider the morphism $t \colon D \to \mP^1$. Above~$\mu_j$ there is a unique point~$Q_j \in D$ if $j=1,2$, while above~$\mu_3$ there are two points~$Q_3^{(\ell)}$, $\ell = 1,2$.

In the rest of the example we fix $\lambda = (\lambda_1,\ldots,\lambda_8)$. Let $T = (C_\lambda \times D)/G$, where $G \subset \Aut(C_\lambda \times D)$ is the subgroup generated by $(\alpha,\beta)$. For $i \in \{1,\ldots,8\}$, let $\bar{Y}_i \subset T$ be the image of $\{P_i\} \times D$; these are all isomorphic to~$\mP^1$. Similarly, let $\bar{Z}_j$ be the image of $C_\lambda \times \{Q_j\}$. Then $\bar{Z}_1 \cong \bar{Z}_2 \cong \mP^1$, while $\bar{Z}_3$ is a curve of genus~$3$. The components $\bar{Y}_i$ and~$\bar{Z}_j$ intersect transversally in a single point~$R_{ij}$. For $j \in \{1,2\}$ these are quotient singularities of type $\frac{1}{4}(1,1)$, whereas for $j=3$ the~$R_{ij}$ are ordinary double points.

Let $S \to T$ be a minimal resolution of singularities, and let $Y_i$, $Z_j \subset S$ be the strict transforms of~$\bar{Y}_i$ and~$\bar{Z}_j$. For each pair $(i,j)$ there is an irreducible exceptional divisor~$E_{ij}$ connecting $Y_i$ and~$Z_j$, and we have $E_{ij}^2 = -4$ for $j = 1,2$, whereas $E_{i3}^2 = -2$. See the figure.

\begin{figure}
\begin{center}
\begin{tikzpicture}[x=.3cm,y=.3cm,scale=0.8]
\draw (.5,5)--(46,5)--(46,24.5)--(.5,24.5)--(.5,5);
\foreach \j in {1,2,3}
	\draw[line width=1pt] (2.5,5+5*\j)--(43.5,5+5*\j);
\foreach \i in {1,...,8}
	\draw[-,line width=8pt,draw=white] (5*\i,7)--(5*\i,23);
\foreach \i in {1,...,8}
	\draw[line width=1pt] (5*\i,7)--(5*\i,23);
\foreach \i in {1,...,8}
	\foreach \j in {1,2,3}
		\draw[line width=1pt] (-.5+5*\i,2.5+5*\j)--(2.5+5*\i,5.5+5*\j);
\foreach \i in {1,...,8}
	\node at (5*\i,23.5) {$\scriptstyle -1$};
\foreach \i in {1,...,8}	
	\node at (5*\i,6) {$\scriptstyle Y_{\i}$};	
\foreach \j in {1,2,3}
	\node at (44.5,5+5*\j) {$\scriptstyle Z_{\j}$};
\foreach \i in {1,...,8}
	\foreach \j in {1,2}
		\node at (1.5+5*\i,3.5+5*\j) {$\scriptstyle -4$};
\foreach \i in {1,...,8}
	\node at (1.5+5*\i,18.5) {$\scriptstyle -2$}; 
\node at (1.5,10) {$\scriptstyle -2$};
\node at (1.5,15) {$\scriptstyle -2$};
\node at (1.5,20) {$\scriptstyle -4$};
\node at (-.5,22) {$S$};
\end{tikzpicture}
\end{center}
\end{figure}

By \cite{MistPol}, Cor.~3.6, $K_S^2 =  -16$. The eight components~$Y_i$ are $(-1)$-curves. After blowing them down, the~$E_{i3}$ become $(-1)$-curves, and after blowing these down we arrive at a minimal surface~$S^{\min}$ with $K^2= 0$. 
 
The projection $S \to T \to C_\lambda/G \cong \mP^1$ factors via~$S^{\min}$ and exhibits~$S^{\min}$ as an isotrivial elliptic surface over~$\mP^1$. The singular fibres are the fibres above the eight points~$\lambda_i$; they are all of type~III. The inclusion $C_\lambda \hookrightarrow C_\lambda \times D$ given by $P \mapsto (P,Q_1)$ is a section of the projection $C_\lambda \times D \to C_\lambda$. Dividing out the action of~$G$, we obtain a section of $S^{\min} \to \mP^1$.

By \cite{FriedMorg}, Section~2.2.1, Prop.~2.1, $S^{\min}$ is simply connected. As $p_\geom(S^{\min}) = 1$, it follows from ibid., section~1.3.6, Propositions 3.22 and~3.23(\romannumeral1) that $S^{\min}$ is a K3 surface. In this way we find a family of K3 surfaces depending on $5$ moduli. It is the same as the family described in ibid., Section~1.4.2 (page~62), with $g = g(\mP^1) =0$ and $d=2$. 
\end{example}

\begin{example}
Take $m=14$ and $N=4$ with $a = (1,9,9,9)$ and $b = (3,4,7)$. This is example~60 in our table. The curves~$C_\lambda$ have genus~$13$, the twisting factor~$D$ is a curve of genus~$3$. 

We fix $\lambda = (\lambda_1,\ldots,\lambda_4)$. Consider the quotient morphism $x \colon C_\lambda \to C_\lambda/\langle\alpha\rangle \cong \mP^1$. Above~$\lambda_i$ there is a unique point $P_i \in C_\lambda$. Similarly, we have $t \colon D \to \mP^1$, and if we set $\mu_1 = 0$, $\mu_2 = 1$ and $\mu_3 = \infty$, there is a unique point~$Q_j \in D$ above each~$\mu_j$.

Let $q \colon (C_\lambda\times D) \to T = T_\lambda$ be the quotient morphism, and consider the curves $\bar{Y}_i = q(\{P_i\} \times D)$ ($i=1,\ldots,4$) and $\bar{Z}_j = q(C_\lambda \times \{Q_j\})$ ($j=1,2,3$). The $\bar{Y}_i$ are all rational, and so is~$\bar{Z}_1$. The curves~$\bar{Z}_2$ and~$\bar{Z}_3$ have $g(\bar{Z}_2) = 6$ and $g(\bar{Z}_3) = 1$. The $\bar{Y}_i$ and~$\bar{Z}_j$ intersect transversally in a single point~$R_{ij}$. The type of singularities we find at these points is given by the following table, in which an entry $(n,q)$ indicates that we have a cyclic quotient singularity of type $\frac{1}{n}(1,q)$.
\[
\begin{array}{|c|c|c|c|l|}
\hline
(2,1) & (2,1) & (2,1) & (2,1) & Z_3 \\
\hline
(7,4) & (7,1) & (7,1) & (7,1) & Z_2 \\
\hline
(14,5) & (14,3) & (14,3) & (14,3) & Z_1 \\
\hline
Y_1 & Y_2 & Y_3 & Y_4 & \\
\hline
\end{array}
\]
Resolving singularities we obtain a surface~$S$ that looks as follows.

\begin{center}
\begin{tikzpicture}[x=.3cm,y=.3cm,scale=0.8]
\draw (2.5,5)--(39.5,5)--(39.5,26.5)--(2.5,26.5)--(2.5,5);
\foreach \j in {1,2,3}
	\draw[line width=1pt] (5,5+6*\j)--(37,5+6*\j);
\foreach \i in {1,...,4}
	\draw[-,line width=8pt,draw=white] (8*\i,7)--(8*\i,25);
\foreach \i in {1,...,4}
	\draw[line width=1pt] (8*\i,7)--(8*\i,25);
\foreach \i in {2,3,4}
	\foreach \j in {2,3}
		\draw[line width=1pt] (-.5+8*\i,2.5+6*\j)--(3.5+8*\i,5.5+6*\j);
\draw[line width=1pt] (7.5,20.5)--(11.5,23.5);
\foreach \i in {1,...,4}
	\draw[line width=1pt] (-1+8*\i,8)--(4+8*\i,8);
\foreach \i in {1,...,4}
	\draw[line width=1pt] (3+8*\i,7)--(3+8*\i,12);
\draw[line width=1pt] (7,14)--(12,14);
\draw[line width=1pt] (11,13)--(11,18);
\foreach \i in {1,...,4}
	\node at (8*\i,25.5) {$\scriptstyle -1$};
\foreach \i in {1,...,4}	
	\node at (8*\i,6) {$\scriptstyle Y_{\i}$};	
\foreach \j in {1,2,3}
	\node at (38,5+6*\j) {$\scriptstyle Z_{\j}$};
\foreach \i in {1,...,4}
	\node at (1.75+8*\i,21.5) {$\scriptstyle -2$};
\foreach \i in {2,3,4}
	\node at (1.75+8*\i,15.5) {$\scriptstyle -7$};
\foreach \i in {2,3,4}
	\node at (3.7+8*\i,9.5) {$\scriptstyle -5$};
\node at (11.7,9.5) {$\scriptstyle -3$};	
\node at (11.7,15.5) {$\scriptstyle -2$};
\foreach \i in {2,3,4}
	\node at (1.5+8*\i,8.5) {$\scriptstyle -3$};
\node at (9.5,8.5) {$\scriptstyle -5$};	
\node at (9.5,14.5) {$\scriptstyle -4$};
\node at (4,11) {$\scriptstyle -1$};
\node at (4,17) {$\scriptstyle -1$};
\node at (4,23) {$\scriptstyle -2$};
\node at (1.5,22) {$S$};
\end{tikzpicture}
\end{center}
Using \cite{MistPol}, Cor.~3.6, we find that $K_S^2 =  -9$. To obtain a minimal model, we first blow down the four components~$Y_i$. After that we blow down the images of the four $(-2)$-curves connecting $Y_i$ and~$Z_3$. The three $(-3)$-components in the HJ-strings connecting $Y_i$ ($i=2,3,4$) and~$Z_1$ have now become $(-1)$-curves, and we blow down these. Finally we blow down the image of~$Z_1$. At that point we have arrived at a minimal model~$S^{\min}$ with $K^2=3$. (Note that $Z_2$ is not a rational curve.)
\end{example}

\subsection{}
Let us now turn to the general case. Throughout, $(m,N,a,b)$ are data as in~\ref{ssec:mNab} and if $n$ is an integer, we define $\zeta_n = \exp(2\pi i/n)$. Let $f \colon C \to M$ be the family of curves associated with $(m,N,a)$, as in~\ref{ssec:mNa}. Concretely, if $\lambda \in M$ is the class of an $N$-tuple $(\lambda_1,\ldots,\lambda_N)$, the fibre~$C_\lambda$ is the curve given by $y^m = \prod_{i=1}^N\; (x-\lambda_i)^{a_i}$, with automorphism~$\alpha$ of order~$m$ given by $(x,y) \mapsto (x,\zeta_m \cdot y)$.

Consider the morphism $x \colon C_\lambda \to \mP^1$. Over $\lambda_i$ there are $\gcd(a_i,m)$ points~$P_i^{(k)}$ in~$C_\lambda$, all with ramification index $m/\gcd(a_i,m)$. The stabilizer of~$P_i^{(k)}$ in~$\langle \alpha\rangle$ is the subgroup generated by~$\alpha^{\gcd(a_i,m)}$. The latter automorphism acts on the tangent space at~$P_i^{(k)}$ as multiplication by~$\zeta_{m/\gcd(a_i,m)}^{c(i)}$, where $c(i)$ is the inverse of $a_i/\gcd(a_i,m)$ in $(\mZ/\frac{m}{\gcd(a_i,m)}\mZ)^*$.

We have normalized the $N$-tuple $a = (a_1,\ldots,a_N)$ in such a way that $\sum_{i=1}^N\, a_i = 2m$; by the Chevalley-Weil formula~\eqref{eq:CW} this implies that there is a $1$-dimensional space of regular $1$-forms on~$C_\lambda$ on which $\alpha^*$ is multiplication by~$\zeta_m^{-1}$. In fact, direct calculation gives that this space is spanned by $\mathrm{d}x/y$, and we have
\begin{equation}
\label{eq:divdx/y}
\div\left(\frac{\mathrm{d}x}{y}\right) = \sum_{i=1}^N \sum_{k=1}^{\gcd(a_i,m)}\; \left(\frac{m-a_i}{\gcd(a_i,m)} - 1\right) \cdot P_i^{(k)}\, .
\end{equation}

The ``twisting factor'' is the curve~$D$ obtained from the triple~$(m,3,b)$; concretely it is the curve given by $u^m = t^{b_1}(t-1)^{b_2}$, with automorphism~$\beta$ of order~$m$ given by $(t,u) \mapsto (t,\zeta_m \cdot u)$. Let $\mu_1 = 0$, $\mu_2 = 1$ and~$\mu_3 = \infty$, and consider the morphism $t \colon D \to \mP^1$. Over $\mu_j$ there are $\gcd(b_j,m)$ points~$Q_j^{(\ell)}$ in~$D$, all with ramification index $m/\gcd(b_j,m)$. The stabilizer of~$Q_j^{(\ell)}$ in $\langle\beta\rangle \subset \Aut(D)$ is the subgroup generated by~$\beta^{\gcd(b_j,m)}$. The latter automorphism acts on the tangent space as multiplication by~$\zeta_{m/\gcd(b_j,m)}^{d(j)}$, where $d(j)$ is the inverse of $b_j/\gcd(b_j,m)$ in $(\mZ/\frac{m}{\gcd(b_j,m)}\mZ)^*$, . 

Again by Chevalley-Weil, the fact that $\sum_{j=1}^3\, b_j = m$ implies that there is a $1$-dimensional space of regular $1$-forms on~$D$ on which $\beta^*$ is multiplication by~$\zeta_m$; this space is spanned by $t^{b_1-1}(t-1)^{b_2-1}\mathrm{d}t/u^{m-1}$, and
\begin{equation}
\label{eq:divformD}
\div\left(\frac{t^{b_1-1}(t-1)^{b_2-1}\mathrm{d}t}{u^{m-1}}\right) = \sum_{j=1}^3 \sum_{\ell=1}^{\gcd(b_j,m)}\; \left(\frac{b_j}{\gcd(b_j,m)} - 1\right) \cdot Q_j^{(\ell)}\, .
\end{equation}

\subsection{}
\label{ssec:Tsing}
Define $T_\lambda = (C_\lambda \times D)/G$, where $G \cong \mZ/m\mZ$ is the subgroup of $\Aut(C_\lambda \times D)$ generated by $\gamma = (\alpha,\beta)$. In what follows we fix $\lambda \in M$ and abbreviate~$T_\lambda$ to~$T$. Let $q\colon C_\lambda \times D \to T$ be the quotient morphism, and let 
\[
C_\lambda/\langle\alpha\rangle \cong \mP^1 \xleftarrow{~\pr_1~} T \xrightarrow{~\pr_2~} \mP^1 \cong D/\langle\beta\rangle 
\]
be the two projections. For $i \in \{1,\ldots,N\}$, let $\bar{Y}_i \subset T$ be the divisor obtained as the image of $\{P_i^{(k)}\} \times D$ (for any~$k$), which is the reduced fibre of~$\pr_1$ over~$\lambda_i$. Similarly, for $j \in \{1,2,3\}$, let $\bar{Z}_j \subset T$ be the image of $C_\lambda \times \{Q_j^{(\ell)}\}$ (for any~$\ell$), which is the reduced fibre of~$\pr_2$ over~$\mu_j$.

Fix $i \in \{1,\ldots,N\}$ and $j \in \{1,2,3\}$. The curves~$\bar{Y}_i$ and~$\bar{Z}_j$ intersect in $\gcd(a_i,b_j,m)$ distinct points; these correspond to the $G$-orbits in the set of points $(P_i^{(k)},Q_j^{(\ell)})$, for $1\leq k \leq \gcd(a_i,m)$ and $1 \leq \ell \leq \gcd(b_j,m)$. The stabilizer in $G = \langle \gamma\rangle$ of any of these points is the subgroup generated by~$\gamma^h$, where $h = \lcm\bigl(\gcd(a_i,m),\gcd(b_j,m)\bigr)$. Let $c(i)$ and~$d(j)$ be as above and write $n = m/h$; then $\gamma^h$ acts on the tangent space by $(X,Y) \mapsto (\zeta_n^{c(i)} \cdot X,\zeta_n^{d(j)} \cdot Y)$. If $q = d(j) \cdot c(i)^{-1} \in \{1,\ldots,n-1\}$ is the representative of $(a_i/\gcd(a_i,m)) \cdot (b_j/\gcd(b_j,m))^{-1}$ in $(\mZ/n\mZ)^*$, the conclusion is that $\bar{Y}_i \cap \bar{Z}_j$ consists of $\gcd(a_i,b_j,m)$ points, at each of which $T$ has a cyclic quotient singularity given by the pair of numbers $(n,q)$.

\begin{remark}
If $q^\prime \in \{1,\ldots,n-1\}$ represents the inverse of~$q$ modulo~$n$, the pairs $(n,q)$ and $(n,q^\prime)$ give isomorphic singularities. However, if we say that $T$ has a singularity of type $(n,q)$ at $R = q(P,Q)$ we mean that there is a primitive $n$th root of unity~$\xi$ such that, with notation as above, $\gamma^h$ acts on the tangent space $T_P(C_\lambda) \oplus T_Q(D)$ by $(X,Y) \mapsto (\xi \cdot X,\xi^q \cdot Y)$. This normalization plays a role in the calculations that follow.
\end{remark}

\subsection{}
\label{ssec:SelfInt}
Let $r \colon S \to T$ be a minimal resolution of singularities. As before, if there is no risk of confusion we fix~$\lambda$ and write~$S$ instead of~$S_\lambda$. Let $Y_i, Z_j \subset S$ be the strict transforms of $\bar{Y}_i$ and~$\bar{Z}_j$. 

Let $i \in \{1,\ldots,N\}$ and $j \in \{1,2,3\}$. Let $n$ and~$q$ be calculated as in~\ref{ssec:Tsing}, and let $n/q = [b_1,\ldots,b_t]$ be the Hirzebruch-Jung continued fraction expansion. (See \cite{MistPol}, Section~2.) For $R \in \bar{Y}_i \cap \bar{Z}_j$ the exceptional fibre~$r^{-1}\{R\}$ consists of a chain $E_1,\ldots,E_t$ of rational curves such that in the sequence $Z_j,E_1,\ldots,E_t,Y_i$ each component transversally intersects the next and $E_k^2 = -b_k$. The chain of components~$E_k$ is referred to as the HJ-string in the resolution. 

By \cite{Polizzi}, Prop.~2.8, if $\bar{Z}_j$ contains singular points of type $(n_1,q_1),\ldots,(n_r,q_r)$ then $(Z_j)^2 = - \sum_{\nu=1}^r\,  q_\nu/n_\nu$. The same result can be used to calculate the self-intersections~$(Y_i)^2$, but we must change the role of the two coordinates. The result is that if~$\bar{Y}_i$  contains singular points of type $(n_1,q_1),\ldots,(n_s,q_s)$ we have $(Y_i)^2 = - \sum_{\nu=1}^r\,  q^\prime_\nu/n_\nu$, where $q^\prime_\nu$ represents the multiplicative inverse of~$q_\nu$ modulo~$n_\nu$.

\subsection{}
Let $T^\circ \subset T$ denote the regular locus. As the dualizing sheaf~$\omega_T$ is reflexive,
\[
H^0(T,\omega_T) = H^0(T^\circ,\omega_T) \isomarrow H^0(C\times D_\lambda,\Omega^1_{C_\lambda} \otimes \Omega^1_D)^G\, .
\]
By construction, this space is $1$-dimensional, and it follows from \eqref{eq:divdx/y} and~\eqref{eq:divformD} that the unique effective Weil divisor~$\cK_T$ representing~$\omega_T$ is given by
\[
\cK_T = \sum_{i=1}^N \; \left(\frac{m-a_i}{\gcd(a_i,m)} - 1\right) \cdot \bar{Y}_i + \sum_{j=1}^3 \; \left(\frac{b_j}{\gcd(b_j,m)} - 1\right) \cdot \bar{Z}_j\, .
\]

Because the singularities on~$T$ are all rational, $\omega_T = r_*\omega_S$. If $\cE$ is the collection of all irreducible components $E \subset S$ that appear in an exceptional fibre of~$r$, it follows (with notation as in~\ref{ssec:KXnotat}) that
\[
\cK_S = \tilde\cK_T + \sum_{E \in \cE}\, n_E \cdot E\, ,
\]
where $\tilde\cK_T$ is the strict transform of~$\cK_T$ and $n_E \geq 0$ for all~$E$.

If $R \in T$ is a singular point of type $(n,q)$ and $n/q = [b_1,\ldots,b_t]$ is the continued fraction expansion, one defines an invariant~$h_R$ of the singularity by
\[
h_R = 2 - \frac{2+q+q^\prime}{2} - \sum_{k=1}^t (b_k-2)\, ,
\]
where $q^\prime \in \{1,\ldots,n-1\}$ represents the inverse of~$q$ modulo~$n$. By \cite{MistPol}, Cor.~3.6, the self-intersection~$K_S^2$ is given by
\begin{equation}
\label{eq:KS2}
K_S^2 = \frac{8(g_{C_\lambda}-1)(g_D-1)}{m} + \sum_{R \in \mathrm{Sing}(T)}\, h_R\, .  
\end{equation}
In almost all our examples, $K_S^2 < 0$ and $S$ is not minimal. If $S^{\min}$ is the minimal model of~$S$, the irreducible curves that are contracted under $S \to S^{\min}$ all have positive coefficient in~$\cK_S$. It follows that the only curves in~$S$ that may be contracted are
\begin{enumerate}
\item\label{item:whichYi} 
the rational components $Y_i$ for which $(m-a_i)/\gcd(a_i,m) > 1$;
\item\label{item:whichZj} 
the rational components $Z_j$ for which $b_j/\gcd(b_j,m) > 1$ (i.e., $b_j \nmid m$);
\item\label{item:EincE} 
the exceptional components $E \in \cE$.
\end{enumerate}
We can refine this, for if $E_1,\ldots,E_t$ is a HJ-string in the resolution that connects~$Z_j$ and~$Y_i$, and if neither~$Y_i$ nor~$Z_j$ is contracted to a point in~$S^{\min}$, the components~$E_i$ cannot be blown down. So in~\ref{item:EincE} we may restrict to the subcollection $\cE^\prime \subset \cE$ of components~$E$ appearing in a HJ-string for which either $Y_i$ is among the components in~\ref{item:whichYi} or $Z_j$ is among the components in~\ref{item:whichZj}.

\subsection{}
For the proof of Theorem~\ref{thm:MainThm}, the main point is to calculate $K^2$ on the minimal model~$S^{\min}$. For this we first calculate~$K_S^2$ using~\eqref{eq:KS2}. Then we draw up a list of all rational components $Y_i$, $Z_j$ that occur with positive coefficient in~$\cK_S$, as in \ref{item:whichYi} and~\ref{item:whichZj} above, together with all exceptional components $E \in \cE^\prime$. Let $A$ be the intersection matrix for this collection of components, using the results described in~\ref{ssec:SelfInt}.

Using a python script we then calculate how often we can blow down a component. For this, in each step of the process we simply search for the next $(-1)$-component; if there is one, we replace the intersection matrix by the (smaller) matrix we get after blowing down. Also, if $Z$ is the $(-1)$-component that we are blowing down, we remove all rows and columns corresponding to components~$Z^\prime$ with $Z \cdot Z^\prime \geq 2$, as these will become singular curves after blowing down.

The python script used is available from the author upon request. The values that are found for~$K^2$ on~$S^{\min}$ are listed in the last column of Table~\ref{table:mNab}. The proof of Theorem~\ref{thm:MainThm} is then completed by using the following result.

\begin{proposition}
With $S = S_\lambda$ for some $\lambda \in M$, let $S^{\min}$ be the minimal model of~$S$. If $S^{\min}$ has $K^2 = 0$ then it is a K3 surface. If $K^2 > 0$ then $S^{\min}$ is of general type.
\end{proposition}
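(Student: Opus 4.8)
The plan is to read off $\kod(S^{\min})$ from the Enriques--Kodaira classification together with the invariants we already control, using the product structure $T_\lambda=(C_\lambda\times D)/G$ to treat the case $K^2=0$. Recall that $q(S^{\min})=0$ and $p_\geom(S^{\min})=1$, so $\chi(\cO_{S^{\min}})=1-q+p_\geom=2$, and since $p_\geom\neq 0$ the surface $S^{\min}$ is neither rational nor birationally ruled, whence $\kod(S^{\min})\ge 0$. A minimal surface with $0\le\kod\le 1$ satisfies $K^2=0$: the canonical class is numerically trivial when $\kod=0$, and when $\kod=1$ the surface, having no $(-1)$-curves, is relatively minimal over its Iitaka elliptic fibration, so $K_{S^{\min}}$ is a sum of vertical divisors and $K_{S^{\min}}^2=0$. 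Hence $K^2>0$ forces $\kod(S^{\min})=2$, which proves the second assertion and reduces the first to showing that $K^2=0$ implies $\kod(S^{\min})=0$ and that $S^{\min}$ is a K3 surface.

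Assume now $K^2=0$, so that $\kod(S^{\min})\in\{0,1\}$. The first step is to show $g(D)=1$. Since $G\cong\mZ/m\mZ$ is cyclic, generated by $\gamma=(\alpha,\beta)$, and $\mathrm{Fix}(\gamma^k)=\mathrm{Fix}(\alpha^k)\times\mathrm{Fix}(\beta^k)$ is finite whenever $\gamma^k\neq 1$ (then $\alpha^k\neq 1$, being a non-trivial automorphism of finite order, has finitely many fixed points on $C_\lambda$), the quotient map $\pi\colon C_\lambda\times D\to T_\lambda$ is \'etale in codimension one. Therefore $\pi^*K_{T_\lambda}=K_{C_\lambda\times D}=\pr_1^*K_{C_\lambda}+\pr_2^*K_D$. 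If $g(D)\ge 2$, then since also $g(C_\lambda)\ge g^\new=(N-2)\varphi(m)/2\ge 2$, the surface $C_\lambda\times D$ has ample canonical class; as ampleness descends along the finite surjection $\pi$, $K_{T_\lambda}$ is an ample $\mQ$-Cartier divisor on the klt surface $T_\lambda$. Then $T_\lambda$ is the canonical model of $S_\lambda$, so $\kod(S^{\min})=2$ --- contradicting $K^2=0$. Since $g(D)=1+\tfrac12\bigl(m-\textstyle\sum_j\gcd(m,b_j)\bigr)\ge 1$ always (because $\sum_j\gcd(m,b_j)\le\sum_j b_j=m$), we conclude $g(D)=1$.

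Now $\pr_1\colon S_\lambda\to C_\lambda/\langle\alpha\rangle\cong\mP^1$ is an isotrivial elliptic fibration with general fibre $D$. The automorphism $\beta$ of the elliptic curve $D$ fixes any point of $D$ over $t=0$ with $u=0$; taking such a point as the origin, $\langle\beta\rangle\subseteq\Aut(D,O)$, a cyclic group of order $2$, $4$ or $6$. As $m\ge 3$, we get $m\in\{3,4,6\}$, and then every subgroup of $\langle\beta\rangle$ is generated by an automorphism of $D$ fixing $O$, hence acts on $D$ with a fixed point. Consequently $\pr_1$ has no multiple fibre: its reduced fibre over $\lambda_i$ is $D/\langle\beta^{\gcd(a_i,m)}\rangle$, which would be a multiple fibre only if $\langle\beta^{\gcd(a_i,m)}\rangle$ acted freely on $D$, and it does not. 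Blowing down the $(-1)$-curves contained in fibres we reach the relatively minimal elliptic surface $\widehat S\to\mP^1$, which still has $\chi(\cO_{\widehat S})=2$ and no multiple fibre; by the canonical bundle formula $K_{\widehat S}$ is the pull-back from $\mP^1$ of a line bundle of degree $-2+\chi(\cO_{\widehat S})=0$, so $K_{\widehat S}=\cO_{\widehat S}$. Thus $\widehat S$ is a minimal surface with trivial canonical bundle and irregularity $q(\widehat S)=q(S^{\min})=0$, i.e.\ a K3 surface; since $\kod(S_\lambda)=0$ the minimal model is unique, so $S^{\min}=\widehat S$ is a K3 surface.

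The only step requiring more than the Enriques--Kodaira classification is the exclusion of $\kod(S^{\min})=1$ in the case $K^2=0$: the bare triple $(K^2,q,p_\geom)=(0,0,1)$ is also realised by minimal properly elliptic surfaces, so one genuinely has to use the geometry of the $S_\lambda$ --- concretely, that $\pi\colon C_\lambda\times D\to T_\lambda$ is \'etale in codimension one (so that $g(D)\ge 2$ would make $K_{T_\lambda}$ ample), together with the numerical constraint $g(D)=1\Rightarrow m\in\{3,4,6\}$, which rules out multiple fibres. One should also check that these statements are insensitive to the resolution $S_\lambda\to T_\lambda$ and to the minimalization; this is routine, since $\chi(\cO)$ and $\kod$ are birational invariants, $T_\lambda$ has quotient singularities, and passing to the relatively minimal model of an elliptic fibration changes neither $\chi(\cO)$ nor the set of multiple fibres.
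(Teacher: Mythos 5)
Your reduction to the case $K^2=0$, $\kod(S^{\min})\in\{0,1\}$, and the observation that $\kod=0$ with $q=0$, $p_\geom=1$ forces a K3 surface, are both correct and agree with what the paper does implicitly. The gap is in the step that excludes $\kod(S^{\min})=1$, and it is a real one.

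You argue that if $g(D)\geq 2$ then (using $g(C_\lambda)\geq 2$ and the fact that $q\colon C_\lambda\times D\to T_\lambda$ is \'etale in codimension one) the $\mQ$-divisor $K_{T_\lambda}$ is ample, so that $T_\lambda$ is the canonical model of~$S_\lambda$ and hence $\kod(S^{\min})=2$. That last implication is false: $T_\lambda$ has cyclic quotient singularities of type $\frac{1}{n}(1,q)$ which are klt but in general \emph{not} canonical (not rational double points). Writing $K_S=r^*K_{T_\lambda}+\sum d_iE_i$ with discrepancies $d_i\in(-1,0]$, negative $d_i$ impose order-$n$ vanishing conditions on sections of $nK_{T_\lambda}$ at the singular points, and these conditions scale quadratically in $n$ --- exactly the same order as the growth of $h^0(T_\lambda,nK_{T_\lambda})$. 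Thus ampleness of $K_{T_\lambda}$ does not control $\kod$ of the minimal resolution. A normal surface with ample canonical class and non-canonical quotient singularities need not be the canonical model of its resolution, nor does its resolution need to be of general type.

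The paper's own data give explicit counterexamples to your intermediate claim that $K^2=0\Rightarrow g(D)=1$: row 4 of Table~\ref{table:mNab} has $(m,N,a,b)=(5,4,(1,3,3,3),(1,1,3))$ with $g_C=4$, $g_D=2$ and $K^2=0$ (a K3 family), and rows~15, 16, 30, 41, etc.\ similarly have $g_D\geq 3$ with $K^2=0$. With $g(D)\geq 2$ your subsequent steps (that $\pr_1$ is an elliptic fibration, that $m\in\{3,4,6\}$, and the no-multiple-fibre discussion) are vacuous, so the case $\kod=1$ is not ruled out.

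The paper's argument is quite different and does not pass through $T_\lambda$ at all: assuming $\kod(S^{\min})=1$, it takes a general smooth genus-$1$ fibre of the Iitaka elliptic fibration, pulls it back to $S$ as a curve $\hat F_b$ with $\hat F_b^2=0$ and $\hat F_b\cdot\cK_S=0$, notes that $\hat F_b$ is not contained in the support of the effective divisor $\cK_S$ (which, when nonzero, would contract to a rational curve), concludes that $\hat F_b$ is \emph{disjoint} from $\cK_S$, and then derives a contradiction from the fact that $\cK_S$ contains full fibres of both projections $\pr_1,\pr_2\colon S\to\mP^1$, forcing $\hat F_b$ to be contracted by both. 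This argument exploits the bifibred structure of $S$ directly rather than the genus of the two factors.
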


\begin{proof}
To simplify notation, write $X = S^{\min}$. We need to show that $X$ cannot have Kodaira dimension~$1$. Suppose, to the contrary, that $\kod(X) = 1$. By \cite{Badescu}, Theorems~7.13 and~9.4, there exists an elliptic fibration $f \colon X \to B$. Because $q(X) = 0$ and $p_\geom(X) = 1$, we find that $B \cong \mP^1$ and $R^1f_* \cO_X \cong \cO_B(-2)$. In particular, there are no exceptional fibres. If $n_1H_1, \ldots, n_rH_r$ are the multiple fibres ($n_i \geq 2$ and $H_i \subset X$ a reduced fibre), ibid., Theorem~7.15(c) gives
\[
\omega_X \cong \cO_X\left(\sum_{i=1}^r\, (n_i-1) H_i\right)\, .
\]

There is a co-finite set $B^\circ \subset B$ such that for $b \in B^\circ$ the fibre $F_b = f^{-1}\{b\}$ is an non-singular curve of genus~$1$. Let $\hat{F}_b \subset S$ be its inverse image in~$S$. Possibly after shrinking~$B^\circ$ we have $\hat{F}_b \isomarrow F_b$ for all $b \in B^\circ$, and as the $\hat{F}_b$ form an algebraic family of mutually disjoint curves, $\hat{F}_b^2 = 0$. Hence $\hat{F}_b \cdot \cK_S = 2g(\hat{F}_b) - 2 = 0$. All irreducible components of~$\cK_S$ that are contracted under $S \to X = S^{\min}$ are rational curves. As $F_b$ is not contained in the support of~$\cK_X$, it follows that $\hat{F}_b$ is not contained in the support of~$\cK_S$, so $\hat{F}_b \cdot \cK_S = 0$ implies that $\hat{F}_b$ is even disjoint from~$\cK_S$. As there are always vertical and horizontal fibres~$Y_i$ and~$Z_j$ that occur with positive multiplicity in~$\cK_S$ this is possible only if the~$\hat{F}_b$ are contracted under both projections $\pr_i \colon S \to \mP^1$; but this clearly cannot be the case.
\end{proof}

\begin{remark}
If $\varphi(m) = 2$ (i.e., $m \in \{3,4,6\}$), the same analysis as in Example~\ref{exa:ElliptK3} shows that the surfaces~$S_\lambda$ are (isotrivial) elliptic K3 surfaces.
\end{remark}

\begin{remark}
In some cases there is a quick way to show that $S^{\min}$ is a K3 surface. The strategy for this is the following. Let $T^\circ \subset T$ be the regular locus, $S^\circ \subset S$ its inverse in~$S$, so that $r \colon S\to T$ restricts to an isomorphism $S^\circ \isomarrow T^\circ$. For $k > 0$,
\[
H^0(S,\omega_S^{\otimes k}) \longhookrightarrow H^0(S^\circ,\omega_{S^\circ}^{\otimes k}) = H^0(T^\circ,\omega_{T^\circ}^{\otimes k}) = H^0(C_\lambda \times D,\omega_{C_\lambda \times D}^{\otimes k})^G\, ,
\]
where we use that on $C_\lambda \times D$ we only have isolated points with non-trivial stabilizer. Analogous to the notation $m_C^{(k)}(j)$ introduced in~\ref{ssec:CurveInvars}, write $m_D^{(k)}(j)$ for the dimension of the $\zeta_m^j$-eigenspace of~$\beta^*$ in $H^0(D,\omega_D^{\otimes k})$. For the plurigenera of~$S$ this gives the estimate
\[
P_k(S) \leq h^0(C_\lambda \times D,\omega_{C_\lambda \times D}^{\otimes k})^G = \sum_{j\in (\mZ/m\mZ)}\, m_C^{(k)}(j) \cdot m_D^{(k)}(-j)\, .
\] 
On the other hand, it is easily seen from the Chevalley-Weil formula \eqref{eq:CW} that
\[
m_C^{(k+\ell m)}(j) = m_C^{(k)}(j) + \ell \cdot \delta_C\, ,\qquad
m_D^{(k+\ell m)}(j) = m_D^{(k)}(j) + \ell \cdot \delta_D\, , 
\]
where the constants $\delta_C$, $\delta_D$ are given by
\[
\delta_C = (N-2)m - \sum_{i=1}^N\, \gcd(a_i,m)\, ,\qquad
\delta_D = m - \sum_{j=1}^3\, \gcd(b_j,m)\, .
\]
Hence if we write the function $\ell \mapsto \sum_{j\in (\mZ/m\mZ)}\, m_C^{(k+\ell m)}(j) \cdot m_D^{(k+ \ell m)}(-j)$ as a polynomial in~$\ell$, the quadratic term has coefficient $m \cdot \delta_C \cdot \delta_D$.  

If $S$ is of general type, $P_k(S) = 2 + \binom{k}{2}\cdot K_{S^{\min}}^2$ and $K_{S^{\min}}^2 \geq 1$. In particular, we must have $\delta_C \cdot \delta_D \geq m/2$. In about one third of all examples in Table~\ref{table:mNab} this fails; in these cases we can immediately conclude that $S^{\min}$ is a K3 surface.
\end{remark}

\section{Further examples}
\label{sec:Further}

\subsection{}
If we drop the assumption that $p_\geom = 1$ there are many more examples we can make. For instance, one can start with a family of curves $C \to M$ for which the image of the period map $M \to A_g$ is a special subvariety of positive dimension. See for instance Tables~1 and~2 in~\cite{MO} for examples of such. Next take any curve~$D$ whose Jacobian is of CM type, for instance a curve obtained as a cyclic cover of~$\mP^1$ ramified over three points. The family of surfaces $C_\lambda \times D$ (or any quotient thereof) then contains infinitely many CM fibres.

\subsection{}
\label{ssec:furtherexa}
More interesting is that we vary on the construction given in the previous sections. In the examples discussed above, we have assumed that the new part of the Jacobians $J(C_\lambda)$ is parametrized by a ball quotient. We obtain some further examples by starting with a family of curves $C \to M$ given by data $(m,N,a)$ for which the new part of the Jacobians is rigid but instead the old part (or an isogeny factor thereof) is parametrized by a ball quotient. Three examples of such families are given in Section~6.3 of~\cite{Rohde}. For each of these, we may then again look for a ``twisting factor''~$D$. 

What changes is that there is no longer the restriction that $D \to \mP^1$ is branched over only three points. Further, $D$ no longer needs to be a single CM curve but can be a family of curves. (The argument in the proof of Proposition~\ref{prop:mNabMatch} that gives $N^\prime = 3$ no longer applies.) All that matters is that there are isogeny factors $A$ and~$B$ of the families of Jacobians~$J_C$ and~$J_D$, both with action of~$\mQ(\zeta_d)$, for some $d|m$, such that 
\begin{itemize}
\item[(1)] the desingularization of the surface $T_{\lambda,\mu} = (C_\lambda \times D_\mu)/G$ has $p_\geom = 1$, and for general parameter values $\lambda$ and~$\mu$ the transcendental part of its~$H^2$ is isomorphic to $H^1(A_\lambda) \otimes_{\mQ(\zeta_d)} H^1(B_\mu)$;
\item[(2)] in the moduli space of abelian varieties, the abelian schemes $A$ and~$B$ both trace out a special subvariety, so that there is a dense set of parameter values $\lambda$ and~$\mu$ for which $A_\lambda$ and~$B_\mu$ are of CM type.
\end{itemize}
Once we have found a family of curves~$D_\mu$ for which this holds, we can proceed as before.

Table~\ref{table:further} gives four families of surfaces with $p_\geom = 1$ that were found in this way. In three cases this is a family of K3 surfaces; the fourth example is a family of surfaces of general type with $K^2 = 1$.

\section{Is every K3 surface dominated by a product of curves?}
\label{sec:DPC}

A variety~$X$ is said to be DPC (dominated by a product of curves) if there exists a dominant rational map from a product of curves to~$X$. The surfaces we have discussed in the previous sections are product-quotients and therefore visibly have this property. In particular, this gives a large number of explicit examples of families of surfaces that are DPC; the largest family (no.~150) giving a $9$-dimensional subvariety in the moduli space of K3's. For a discussion of the motivic implications of this (e.g., the finite-dimensionality of the motives of these surfaces) see for instance~\cite{Laterveer}.

In general, it may be hard to decide whether or not some given variety~$X$ is DPC. In a letter to Grothendieck from March~31, 1964, Serre proved that there exist surfaces that are not DPC. (See~\cite{GrothSerre}; we thank Frans Oort for the reference.) Deligne in \cite{DelK3}, Section~7 gave a Hodge-theoretic criterion that can be used to show that varieties are not DPC. These ideas were refined by Schoen in~\cite{Schoen}. For surfaces with $p_\geom = 1$, however, these methods do not lead to an answer, and among the open problems mentioned at the end of~\cite{Schoen} we find the question that is the title of this section. It appears that the expectation among experts is that the answer should be negative. While we are unable to prove this, our main result in this section shows that for a general K3 surface, if it is dominated by a product of curves, the genera of these curves cannot be too small.

\subsection{}
\label{ssec:spinrep}
Let $T$ be a Hodge structure of K3 type with $\dim(T) = 2\ell+1$ for some $\ell \geq 1$ and $\End_\QHS(T) = \mQ$. Up to scalars there is a unique polarization $\phi \colon T \otimes T \to \mQ(0)$, and by the results of Zarhin in~\cite{Zarhin}, the Mumford-Tate group of~$T$ is the special orthogonal group $\SO(\phi)$.

Consider the algebraic group $\CSpin(\phi)$, whose derived subgroup $\Spin(\phi)$ is the simply connected cover of~$\SO(\phi)$. Write $R_\spin$ for the spin representation of $\CSpin(\phi)$ over~$\Qbar$. There is a unique irreducible representation $r_\spin \colon \CSpin(\phi) \to \GL(V)$ over~$\mQ$ such that $r_{\spin,\Qbar} \cong R_\spin^{\oplus m}$ for some $m \geq 1$. More precisely, the even Clifford algebra $C^+(\phi)$ is a central simple $\mQ$-algebra whose Brauer class has order dividing~$2$. If $C^+(\phi)$ is a matrix algebra then $r_\spin$ is an absolutely irreducible representation of dimension~$2^\ell$, so $r_{\spin,\Qbar} \cong R_\spin$; if $C^+(\phi)$ is a matrix algebra over a non-split quaternion algebra then $r_\spin$ has dimension~$2^{\ell+1}$ and $r_{\spin,\Qbar} \cong R_\spin^{\oplus 2}$.

The homomorphism $h \colon \mS \to \SO(\phi)$ that gives the Hodge structure on~$T$ lifts to a homomorphism $\tilde{h} \colon \mS \to \CSpin(\phi)$ such that $r_{\spin,\mR} \circ \tilde{h}$ defines on~$V$ an irreducible polarizable $\mQ$-Hodge structure of type $(0,1) + (1,0)$. The latter Hodge structure is the $H^1$ of a simple abelian variety~$A$ up to isogeny that we refer to as the simple Kuga-Satake partner of~$T$. We have $\dim(A) = 2^{\ell -1}$ or $\dim(A) = 2^\ell$, depending on the order of the class of~$C^+(\phi)$ in the Brauer group.

\begin{theorem}
\label{thm:K3Thm}
Let $S$ be a complex K3 surface which is general in the sense that the transcendental part $T \subset H^2(S,\mQ)\bigl(1\bigr)$ has dimension~$21$ and $\End_\QHS(T) = \mQ$. If $C$ and~$D$ are (complete non-singular) curves such that there exists a dominant rational map from $C \times D$ to~$S$ then the Jacobians $J_C$ and~$J_D$ both contain the simple Kuga-Satake partner of~$T$ as an isogeny factor. In particular, $C$ and~$D$ both have genus at least~$512$.  
\end{theorem}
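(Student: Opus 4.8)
The plan is to exploit the Hodge-theoretic machinery set up in~\ref{ssec:spinrep}: the key fact is that a dominant rational map $C \times D \dashrightarrow S$ induces, after resolving indeterminacy, a morphism of Hodge structures that realizes the transcendental part $T$ as a sub-quotient of $H^2(C \times D,\mQ)(1)$. First I would reduce to a genuine surjection of $\mQ$-Hodge structures: blowing up $C \times D$ along the indeterminacy locus gives a smooth projective surface $\tilde{X}$ with a surjective morphism $\tilde{X} \to S$, and since $S$ has no exceptional curves contracted (being a K3, it is minimal) the pullback $H^2(S,\mQ)(1) \to H^2(\tilde{X},\mQ)(1)$ is split injective as Hodge structures, with a retraction given by a correspondence (the transpose of the graph, using the projection formula). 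Restricting to the transcendental part, which is the smallest sub-Hodge structure whose complexification contains $H^{2,0}$, we get that $T$ is a direct summand of $H^2(\tilde{X},\mQ)(1)$. By the blow-up formula $H^2(\tilde{X},\mQ)(1) \cong H^2(C \times D,\mQ)(1) \oplus (\text{Tate classes})$, and since $T$ has no nonzero morphism to a Tate Hodge structure (as $\End_\QHS(T) = \mQ$ and $\dim T > 1$), we conclude that $T$ is a direct summand of $H^2(C \times D,\mQ)(1) = H^2(C)(1) \oplus \bigl[H^1(C) \otimes H^1(D)\bigr](1) \oplus H^2(D)(1)$; again the outer summands are Tate, so $T$ is a quotient — hence, by semisimplicity, a direct summand — of $\bigl[H^1(C,\mQ) \otimes H^1(D,\mQ)\bigr](1)$.

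The next step is the Mumford-Tate / Kuga-Satake argument. Since $\End_\QHS(T) = \mQ$ and $\dim T = 21 = 2 \cdot 10 + 1$, Zarhin's theorem gives $\MT(T) = \SO(\phi)$ with $\ell = 10$. Now $T$ embeds as a sub-Hodge structure of $H^1(J_C) \otimes H^1(J_D)$, so the Mumford-Tate group $\SO(\phi)$ is a quotient of $\MT\bigl(H^1(J_C) \oplus H^1(J_D)\bigr) \subset \mathrm{GSp}(H^1 J_C) \times \mathrm{GSp}(H^1 J_D)$ in a way compatible with the tensor-product representation. The point is that the representation of $\SO(\phi)$ on $T$, tensored up and restricted appropriately, must factor through a representation on $H^1(J_C) \otimes H^1(J_D)$; because $\SO(\phi)$ is simple of type $D_{\ell}$ with $\ell=10$, a representation theory argument (the standard representation $T$ of $\SO_{2\ell+1}$, or rather of $\mathrm{Spin}_{2\ell+1} \cong \mathrm{Spin}(\phi)$, appears inside a tensor product $U \otimes W$ of two representations only if each factor contains a tensor power of the spin representation) forces the two "halves" $H^1(J_C)$ and $H^1(J_D)$ to each contain the spin representation $r_\spin$ of $\CSpin(\phi)$ as a Hodge substructure. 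Concretely: the adjoint action of $\SO(\phi)$ forces the Hodge structures $H^1(J_C)$ and $H^1(J_D)$ to each have a simple isogeny factor whose Mumford-Tate group surjects onto $\CSpin(\phi)$ via $r_\spin$; by the characterization in~\ref{ssec:spinrep} this simple factor is precisely the simple Kuga-Satake partner $A$ of $T$.

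Therefore $J_C$ and $J_D$ each contain $A$ as an isogeny factor, and since $\dim A = 2^{\ell-1} = 2^9 = 512$ or $2^{10} = 1024$ (depending on the Brauer class of $C^+(\phi)$), we get $\dim J_C = g(C) \geq 512$ and likewise $g(D) \geq 512$. The cleanest way to run the central representation-theoretic step is via Mumford-Tate groups rather than by hand: let $M = \MT(H^1(J_C) \oplus H^1(J_D) \oplus T)$; projection to the last factor is surjective onto $\SO(\phi)$, and the derived group $\SO(\phi)$ being simple, the map $M^{\der} \to \SO(\phi)$ must, after passing to simply connected covers, involve a quotient on which some factor acts through $\Spin(\phi)$; identifying which tensor factors of $H^1(J_C) \otimes H^1(J_D)$ carry this action and invoking that the weights of $T$ as an $\SO_{2\ell+1}$-representation are the "vector" weights — which lie in the tensor product of two "spinor" weight systems — pins down that each of $H^1(J_C)$, $H^1(J_D)$ contains $r_\spin$.

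The main obstacle I anticipate is the central representation-theoretic / Mumford-Tate step: one must rule out the possibility that the copy of $T$ inside $H^1(J_C) \otimes H^1(J_D)$ arises "degenerately", e.g. with one factor being a trivial (CM, or even Tate) piece and the other carrying all of $T$ — but $T$ cannot sit inside a single $H^1$ of an abelian variety since $H^1$ has weight $1$ and type $(1,0)+(0,1)$ whereas $T$ has type $(1,-1)+(0,0)+(-1,1)$ after the Tate twist, i.e. the numerology of Hodge types already forces a genuine tensor decomposition — and, more subtly, that the minimal abelian variety realizing $T$ on each side is exactly the Kuga-Satake $A$ and not a larger isotypic power. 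This requires carefully tracking that the simple factor appearing is the one attached to the \emph{irreducible} $\mQ$-form $r_\spin$ of the spin representation (hence has the dimension $2^{\ell-1}$ or $2^\ell$ recorded in~\ref{ssec:spinrep}), rather than, say, a factor of dimension $2^\ell$ corresponding to a non-irreducible-over-$\mQ$ situation; handling the two Brauer-class cases uniformly is where the bookkeeping is heaviest, but the lower bound $g \geq 2^9 = 512$ holds in both cases, which is all that is claimed.
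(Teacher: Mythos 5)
Your overall strategy matches the paper's: reduce to $T(-1)$ being a sub-Hodge structure of $H^1(X)\otimes H^1(Y)$ for simple isogeny factors $X\subset J_C$, $Y\subset J_D$, then run a Mumford--Tate / representation-theoretic argument for $\Spin(\phi)$ to force the spin representation on each side. Your reduction step is actually carried out in more detail than the paper's (which simply asserts the existence of the simple factors $X$, $Y$), and that part is fine, as is the weight/Hodge-type observation that the $21$-dimensional $T$ cannot be realized inside a single $H^1$.

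However, there is a genuine gap in the central step. Your stated representation-theoretic principle — that the vector representation $T$ of $\Spin_{2\ell+1}$ ``appears inside a tensor product $U\otimes W$ only if each factor contains a tensor power of the spin representation'' — is either vacuous (every representation of $\Spin_{2\ell+1}$ is a summand of some $\mathrm{spin}^{\otimes k}$, since spin is faithful) or false as stated ($T\otimes\mathrm{triv}$ is a counterexample). The ingredient actually doing the work, and which you never invoke, is the constraint from the Hodge cocharacter: because $H^1$ of an abelian variety has Hodge weights $\{-1,0\}$, the irreducible constituents of the action of the simple factor $\Spin(\phi)\subset\MT^{\der}$ on $H^1(X)_{\Qbar}$ must be \emph{minuscule} (Deligne \cite{DelShimura}, \S1.3), and for type $\mathrm{B}_\ell$ the only minuscule representation is the spin representation. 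Only with this in hand can one conclude $H^1(X)|_{\Spin(\phi)}=\mathrm{spin}^{\oplus a}\oplus\mathrm{triv}^{\oplus b}$ and then, using normality of the relevant simple factor in $\MT(X\times Y)$ together with simplicity of $X$, force $H^1(X)$ to be purely spin-isotypic (and likewise for $Y$). Your ``Hodge-type numerology'' argument does not close the gap: the failure mode to exclude is not that $T$ is a sub-Hodge structure of a single $H^1$, but that $T$ is a subrepresentation of $H^1(Y)$ \emph{as a $\Spin(\phi)$-module} while the other tensor factor contributes only $\Spin(\phi)$-invariants; ruling this out requires the non-minusculeness of the vector representation, not weight considerations. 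Finally, you do not carry out the last step — showing $\MT^{\der}(X)=\Spin(\phi)$ exactly (no extra simple factors) and that the connected center is $\mG_{\mathrm{m}}$, via the fractional-cocharacter argument, so that $X$ is the simple Kuga--Satake partner and not merely some abelian variety on which $\Spin(\phi)$ acts through $r_{\spin}$. (Minor: you write ``type $D_\ell$ with $\ell=10$''; with $\dim T = 21 = 2\ell+1$ the group $\SO(\phi)$ is of type $\mathrm{B}_{10}$, as your later references to $\SO_{2\ell+1}$ correctly indicate.)
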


\begin{proof}
Under the assumptions as stated, there exist simple factors $X$ of~$J_C$ and $Y$ of~$J_D$ such that $T(-1)$ is isomorphic to a sub-Hodge structure of $H_X \otimes H_Y$, where $H_X = H^1(X,\mQ)$ and $H_Y = H^1(Y,\mQ)$. On Mumford-Tate group this gives a surjection
\[
\MT(X \times Y) \twoheadrightarrow \MT\bigl(T(-1)\bigr) = \GO(\phi)\, .
\]
Let $G \triangleleft \MT^\der(X\times Y)$ be the unique $\mQ$-simple factor such that this homomorphism restricts to an isogeny $G \to \SO(\phi)$. Viewing $G$ as a quotient of the spin group~$\Spin(\phi)$, we obtain a representation
\[
\rho \colon \Spin(\phi) \to G \hookrightarrow \MT(X\times Y) \twoheadrightarrow \MT(X) \hookrightarrow \GL(H_X)\, .
\]
As the short root in the Dynkin diagram of type~$\mathrm{B}_\ell$ is the only minuscule weight, the only non-trivial irreducible representation of $\Spin(\phi)$ that may occur in~$\rho_\Qbar$ is the spin representation. (See \cite{DelShimura}, Section~1.3.) It follows that $H_X \cong H_{X,0} \oplus H_{X,1}$ as a representation of~$\Spin(\phi)$, where $H_{X,0} \subset H_X$ is the subspace of invariants under~$\rho$ and $H_{X,1}$ is isomorphic to a sum of copies of the representation~$r_\spin$ described in~\ref{ssec:spinrep} (now restricted to~$\Spin(\phi)$). But $G$ is normal in $\MT(X \times Y)$, so this decomposition is preserved under the action of $\MT(X \times Y)$. By our assumption that $X$ is simple, it follows that either $H_X = H_{X,0}$ or $H_X = H_{X,1}$. 

For $Y$ of course the same argument applies. As $G$ acts on~$T$ via the composition $G \to \SO(\phi) \hookrightarrow \GL(T)$, we find that $T(-1)$ can only be a sub-representation of $H_X \otimes H_Y$ if $H_X = H_{X,1}$ and $H_Y = H_{Y,1}$. In particular, the natural homomorphisms $\Spin(\phi) \to \MT(X)$ and $\Spin(\phi) \to \MT(Y)$ are injective.

Next we show that in fact $\Spin(\phi) \isomarrow \MT^\der(X)$, and likewise for~$Y$. Indeed, if $N$ is a simple factor of~$\MT(X)$ then $N_\mR$ has to have non-compact factors, and these must act non-trivially on~$H_{X,\mR}$. On the other hand, in each irreducible representation of $\MT^\der(X)$ that occurs in~$H_{X,\mC}$ only one of the non-compact real factors can act non-trivially. So the conclusion that $H_X = H_{X,1}$ implies that $\MT^\der(X)$ can have no simple factors other than~$\Spin(\phi)$. 

At this point we know that $\MT(X) = Z \cdot \Spin(\phi) \subset \GL(H_X)$, with $Z$ the connected center. The Hodge structure on~$H_X$ gives rise to a cocharacter $\mu \colon \mG_\mult \to \MT(X)_\mC$. We can lift this to a fractional homomorphism from $\mG_\mult$ to $Z_\mC \times \Spin(\phi)_\mC$, as in \cite{DelShimura}, Section~1.3.4, and we know that in the action of~$\mG_\mult$ on~$H_{X,\mC}$ only the weights $-1$ and~$0$ occur. (We normalize Hodge structures by the rule that $z \in \mC^* = \mS(\mR)$ acts on $H^{p,q}$ as multiplication by $z^{-p}\cdot \bar{z}^{-q}$.) On the other hand, in the spin representation the weights $-1/2$ and $1/2$ occur (cf.\ ibid, Table~1.3.9). The conclusion, therefore, is that the fractional map from~$\mG_\mult$ to~$Z$ lands in the scalar multiples of the identity on~$H_{X,\mC}$. By definition of the Mumford-Tate group it follows that $\MT(X) = \mG_\mult \cdot \Spin(\phi)$, and because $X$ is simple $H_X \cong r_\spin$ as a representation of~$\Spin(\phi)$. The same conclusions holds for~$Y$. As the representations of $\Spin(\phi)$ on~$H_X$ and~$H_Y$ are the same, we conclude that the two projections
\[
\MT(X) \xleftarrow{~\pr_1~} \MT(X\times Y) \xrightarrow{~\pr_2~} \MT(Y)
\]
are isomorphisms. Now we find that the homomorphism $\mS \to \MT(X\times Y)_\mR$ that defines the Hodge structure on $H_X \oplus H_Y$ is the unique lift~$\tilde{h}$ as in~\ref{ssec:spinrep}, and we conclude that $X \cong Y$ (up to isogeny) is the simple Kuga-Satake partner of~$X$.
\end{proof}

\begin{remark}
Similar arguments can be used to extend the result to K3 surfaces with higher Picard number, or even to the situation where the endomorphism field of the transcendental part of the $H^2$ is a totally real field bigger than~$\mQ$. The details get more involved, though. Such a more general result may also be applied to other classes of surfaces with $p_\geom = 1$.
\end{remark}

\titleformat{\section}[block]
{\filcenter\normalfont\large\bfseries}{\appendixname~\thesection.}{.5em}{}

\begin{appendices}

\section{Overview of the examples}
\label{sec:appendix}

\subsection{}
In Table~\ref{table:mNab} we present the list of one hundred and fifty $4$-tuples $(m,N,a,b)$ as in~\ref{ssec:mNab} that serve as input for Theorems~\ref{thm:TsGiveSpecial} and~\ref{thm:MainThm}.

The columns in this table have the following meaning. The first column is just a reference number. Columns 3--6 give the data $(m,N,a,b)$. In case the triple $(m,N,a)$ also appears in the list at the end of~\cite{Mostow}, the second column contains the reference number of that table. (Note that the same $(m,N,a)$ may appear more than once, and for some $(m,N,a)$ in Mostow's table there is no matching~``$b$'', so that it does not occur in our table. In Mostow's table only triples with $N \geq 5$ occur.) Columns 7 and~8 give the genera of the curves~$C_\lambda$ and~$D$. The last column gives the value of~$K^2$ for the minimal model~$S^{\min}$.

\subsection{}
For $N \geq 5$, Deligne and Mostow have obtained a list of data $(m,N,a)$ such that, with notation as in Section~\ref{sec:ExaDMList}, the monodromy of the variation~$\cH^\new$ is an arithmetic subgroup of~$\UU(\mR)$. Let us note that the perspective taken in \cite{DelMost} and~\cite{Mostow} is different from ours; in particular, the non-arithmetic examples found in these papers are not relevant for us. 

In almost all our examples (at least for $N \geq 5$), the first three items $(m,N,a)$ form a triple in the Deligne-Mostow list. There are some examples (102 and 113) where this is not true; the point is that these do not satisfy the condition $\Sigma$INT that appears in the work of Deligne and Mostow and that is irrelevant for us.

For $N=4$ the situation is more delicate. Deligne and Mostow in this case refer to Takeuchi's classification of arithmetic triangle groups, and explain how Takeuchi's parameters $(p,q,r)$ can be transformed into a triple $(m,4,a)$. (See \cite{DelMost}, Section~14.3, and write $\mu_i = a_i/m$.) In most cases, however, the triple thus obtained does not satisfy the arithmeticity condition in ibid., Prop.~12.7. This is not a contradiction; the point is that the triangle group is not an arithmetic subgroup in a unitary group over a cyclotomic field, but in a quaternionic form of a unitary group.

The list we present here was found by a ``brute force'' computer search. We do not claim it is complete.

\vspace{8mm}

\newcounter{rowno}
\setcounter{rowno}{0}

\rowcolors*{2}{gray!20!white!80}{white}
\begin{longtable}{|>{\stepcounter{rowno}\therowno.}r r >{$}c<{$} >{$}c<{$} >{$}c<{$} >{$}c<{$} >{$}c<{$} >{$}c<{$} >{$}c<{$}|}
\caption{\textbf{Examples\label{table:mNab}}}\\
\hline
\hiderowcolors
\multicolumn{1}{|c}{No.} & \multicolumn{1}{c}{DM No.} & \; m\;{} & \; N\;{} & a & b & \; g_C\;{} & \; g_D\;{} & \quad K^2\quad{} \\
\showrowcolors
\endfirsthead
\hline
\hiderowcolors
\multicolumn{1}{|c}{No.} & \multicolumn{1}{c}{DM No.} & m & N & a & b & g_C & g_D & K^2 \\ 
\hline
\showrowcolors
\endhead
\hline
\endfoot
\hline
& & 3 & 4 & (1, 1, 2, 2) & (1, 1, 1) & 2 & 1 & 0 \\
& & 4 & 4 & (1, 1, 3, 3) & (1, 1, 2) & 3 & 1 & 0 \\
& & 4 & 4 & (1, 2, 2, 3) & (1, 1, 2) & 2 & 1 & 0 \\
& & 5 & 4 & (1, 3, 3, 3) & (1, 1, 3) & 4 & 2 & 0 \\
& & 5 & 4 & (2, 2, 2, 4) & (1, 2, 2) & 4 & 2 & 0 \\
& & 6 & 4 & (1, 1, 5, 5) & (1, 2, 3) & 5 & 1 & 0 \\
& & 6 & 4 & (1, 2, 4, 5) & (1, 2, 3) & 4 & 1 & 0 \\
& & 6 & 4 & (1, 3, 3, 5) & (1, 1, 4) & 3 & 2 & 0 \\
& & 6 & 4 & (1, 3, 3, 5) & (1, 2, 3) & 3 & 1 & 0 \\
& & 6 & 4 & (1, 3, 4, 4) & (1, 1, 4) & 3 & 2 & 0 \\
& & 6 & 4 & (1, 3, 4, 4) & (1, 2, 3) & 3 & 1 & 0 \\
& & 6 & 4 & (2, 2, 3, 5) & (1, 2, 3) & 3 & 1 & 0 \\
& & 6 & 4 & (2, 3, 3, 4) & (1, 1, 4) & 2 & 2 & 0 \\
& & 6 & 4 & (2, 3, 3, 4) & (1, 2, 3) & 2 & 1 & 0 \\
& & 7 & 4 & (2, 4, 4, 4) & (1, 2, 4) & 6 & 3 & 0 \\
& & 7 & 4 & (3, 3, 3, 5) & (1, 3, 3) & 6 & 3 & 0 \\
& & 8 & 4 & (1, 3, 6, 6) & (1, 3, 4) & 6 & 2 & 0 \\
& & 8 & 4 & (1, 5, 5, 5) & (1, 2, 5) & 7 & 3 & 0 \\
& & 8 & 4 & (2, 4, 5, 5) & (1, 2, 5) & 5 & 3 & 0 \\
& & 8 & 4 & (3, 3, 3, 7) & (1, 3, 4) & 7 & 2 & 0 \\
& & 8 & 4 & (3, 3, 3, 7) & (2, 3, 3) & 7 & 3 & 0 \\
& & 8 & 4 & (3, 3, 4, 6) & (1, 3, 4) & 5 & 2 & 0 \\
& & 8 & 4 & (3, 3, 4, 6) & (2, 3, 3) & 5 & 3 & 0 \\
& & 9 & 4 & (1, 5, 5, 7) & (1, 3, 5) & 8 & 3 & 0 \\
& & 9 & 4 & (2, 4, 4, 8) & (2, 3, 4) & 8 & 3 & 0 \\
& & 9 & 4 & (3, 5, 5, 5) & (1, 3, 5) & 7 & 3 & 0 \\
& & 9 & 4 & (4, 4, 4, 6) & (1, 4, 4) & 7 & 4 & 0 \\
& & 9 & 4 & (4, 4, 4, 6) & (2, 3, 4) & 7 & 3 & 0 \\
& & 10 & 4 & (1, 4, 7, 8) & (1, 4, 5) & 8 & 2 & 0 \\
& & 10 & 4 & (1, 5, 7, 7) & (1, 2, 7) & 7 & 4 & 0 \\
& & 10 & 4 & (1, 5, 7, 7) & (1, 4, 5) & 7 & 2 & 0 \\
& & 10 & 4 & (2, 3, 6, 9) & (2, 3, 5) & 8 & 2 & 0 \\
& & 10 & 4 & (2, 4, 7, 7) & (1, 4, 5) & 8 & 2 & 0 \\
& & 10 & 4 & (3, 3, 5, 9) & (2, 3, 5) & 7 & 2 & 0 \\
& & 10 & 4 & (3, 3, 5, 9) & (3, 3, 4) & 7 & 4 & 0 \\
& & 10 & 4 & (3, 3, 6, 8) & (2, 3, 5) & 8 & 2 & 0 \\
& & 10 & 4 & (3, 5, 6, 6) & (1, 3, 6) & 6 & 4 & 0 \\
& & 10 & 4 & (3, 5, 6, 6) & (2, 3, 5) & 6 & 2 & 0 \\
& & 10 & 4 & (4, 4, 5, 7) & (1, 4, 5) & 6 & 2 & 0 \\
& & 12 & 4 & (1, 5, 8, 10) & (1, 5, 6) & 9 & 3 & 0 \\
& & 12 & 4 & (1, 7, 7, 9) & (1, 4, 7) & 10 & 4 & 0 \\
& & 12 & 4 & (2, 6, 7, 9) & (1, 4, 7) & 7 & 4 & 1 \\
& & 12 & 4 & (2, 7, 7, 8) & (2, 3, 7) & 9 & 4 & 0 \\
& & 12 & 4 & (3, 5, 5, 11) & (3, 4, 5) & 10 & 3 & 0 \\
& & 12 & 4 & (3, 5, 6, 10) & (3, 4, 5) & 7 & 3 & 0 \\
& & 12 & 4 & (3, 5, 8, 8) & (1, 5, 6) & 7 & 3 & 1 \\
& & 12 & 4 & (3, 7, 7, 7) & (1, 4, 7) & 10 & 4 & 0 \\
& & 12 & 4 & (3, 7, 7, 7) & (2, 3, 7) & 10 & 4 & 0 \\
& & 12 & 4 & (4, 5, 5, 10) & (1, 5, 6) & 9 & 3 & 0 \\
& & 12 & 4 & (4, 5, 5, 10) & (3, 4, 5) & 9 & 3 & 0 \\
& & 12 & 4 & (4, 6, 7, 7) & (1, 4, 7) & 7 & 4 & 0 \\
& & 12 & 4 & (4, 6, 7, 7) & (2, 3, 7) & 7 & 4 & 0 \\
& & 12 & 4 & (5, 5, 5, 9) & (1, 5, 6) & 10 & 3 & 0 \\
& & 12 & 4 & (5, 5, 5, 9) & (2, 5, 5) & 10 & 5 & 0 \\
& & 12 & 4 & (5, 5, 5, 9) & (3, 4, 5) & 10 & 3 & 0 \\
& & 12 & 4 & (5, 5, 6, 8) & (1, 5, 6) & 7 & 3 & 0 \\
& & 12 & 4 & (5, 5, 6, 8) & (2, 5, 5) & 7 & 5 & 0 \\
& & 12 & 4 & (5, 5, 6, 8) & (3, 4, 5) & 7 & 3 & 0 \\
& & 14 & 4 & (1, 5, 11, 11) & (2, 5, 7) & 13 & 3 & 2 \\
& & 14 & 4 & (1, 9, 9, 9) & (3, 4, 7) & 13 & 3 & 3 \\
& & 14 & 4 & (2, 5, 10, 11) & (2, 5, 7) & 12 & 3 & 0 \\
& & 14 & 4 & (3, 3, 9, 13) & (3, 4, 7) & 13 & 3 & 0 \\
& & 14 & 4 & (3, 4, 9, 12) & (3, 4, 7) & 12 & 3 & 0 \\
& & 14 & 4 & (3, 7, 9, 9) & (2, 3, 9) & 10 & 6 & 0 \\
& & 14 & 4 & (3, 7, 9, 9) & (3, 4, 7) & 10 & 3 & 0 \\
& & 14 & 4 & (4, 6, 9, 9) & (3, 4, 7) & 12 & 3 & 0 \\
& & 14 & 4 & (5, 5, 5, 13) & (2, 5, 7) & 13 & 3 & 0 \\
& & 14 & 4 & (5, 5, 7, 11) & (2, 5, 7) & 10 & 3 & 0 \\
& & 14 & 4 & (5, 5, 7, 11) & (4, 5, 5) & 10 & 6 & 0 \\
& & 14 & 4 & (5, 5, 8, 10) & (2, 5, 7) & 12 & 3 & 0 \\
& & 15 & 4 & (2, 8, 8, 12) & (2, 5, 8) & 13 & 5 & 0 \\
& & 15 & 4 & (3, 7, 7, 13) & (3, 5, 7) & 13 & 4 & 0 \\
& & 15 & 4 & (4, 8, 8, 10) & (3, 4, 8) & 12 & 6 & 0 \\
& & 15 & 4 & (5, 7, 7, 11) & (3, 5, 7) & 12 & 4 & 0 \\
& & 15 & 4 & (6, 8, 8, 8) & (2, 5, 8) & 13 & 5 & 0 \\
& & 15 & 4 & (7, 7, 7, 9) & (3, 5, 7) & 13 & 4 & 0 \\
& & 16 & 4 & (7, 7, 7, 11) & (1, 7, 8) & 15 & 4 & 0 \\
& & 18 & 4 & (4, 5, 12, 15) & (4, 5, 9) & 13 & 4 & 0 \\
& & 18 & 4 & (5, 5, 11, 15) & (4, 5, 9) & 16 & 4 & 0 \\
& & 18 & 4 & (7, 7, 7, 15) & (2, 7, 9) & 16 & 4 & 0 \\
& & 18 & 4 & (7, 7, 10, 12) & (2, 7, 9) & 14 & 4 & 0 \\
& & 22 & 4 & (7, 7, 13, 17) & (4, 7, 11) & 21 & 5 & 0 \\
& & 22 & 4 & (9, 9, 9, 17) & (2, 9, 11) & 21 & 5 & 0 \\
& 41. & 3 & 5 & (1, 1, 1, 1, 2) & (1, 1, 1) & 3 & 1 & 0 \\
& 42. & 4 & 5 & (1, 1, 1, 2, 3) & (1, 1, 2) & 4 & 1 & 0 \\
& 43. & 4 & 5 & (1, 1, 2, 2, 2) & (1, 1, 2) & 3 & 1 & 0 \\
& 44. & 5 & 5 & (2, 2, 2, 2, 2) & (1, 2, 2) & 6 & 2 & 0 \\
& 45.& 6 & 5 & (1, 1, 1, 4, 5) & (1, 2, 3) & 7 & 1 & 0 \\
& 46. & 6 & 5 & (1, 1, 2, 3, 5) & (1, 2, 3) & 6 & 1 & 0 \\
& 47. & 6 & 5 & (1, 1, 2, 4, 4) & (1, 2, 3) & 6 & 1 & 0 \\
& 48. & 6 & 5 & (1, 1, 3, 3, 4) & (1, 1, 4) & 5 & 2 & 0 \\
& 48. & 6 & 5 & (1, 1, 3, 3, 4) & (1, 2, 3) & 5 & 1 & 0 \\
& 49. & 6 & 5 & (1, 2, 2, 2, 5) & (1, 2, 3) & 6 & 1 & 0 \\
& 50. & 6 & 5 & (1, 2, 2, 3, 4) & (1, 2, 3) & 5 & 1 & 0 \\
& 51. & 6 & 5 & (1, 2, 3, 3, 3) & (1, 1, 4) & 4 & 2 & 1 \\
& 51. & 6 & 5 & (1, 2, 3, 3, 3) & (1, 2, 3) & 4 & 1 & 0 \\
& 52. & 6 & 5 & (2, 2, 2, 3, 3) & (1, 2, 3) & 4 & 1 & 0 \\
& 53. & 8 & 5 & (1, 3, 3, 3, 6) & (1, 3, 4) & 10 & 2 & 0 \\
& 55. & 8 & 5 & (3, 3, 3, 3, 4) & (1, 3, 4) & 9 & 2 & 0 \\
& 55. & 8 & 5 & (3, 3, 3, 3, 4) & (2, 3, 3) & 9 & 3 & 0 \\
& 56. & 9 & 5 & (2, 4, 4, 4, 4) & (2, 3, 4) & 12 & 3 & 0 \\
& & 10 & 5 & (1, 1, 4, 7, 7) & (1, 4, 5) & 13 & 2 & 0 \\
& 57. & 10 & 5 & (1, 4, 4, 4, 7) & (1, 4, 5) & 12 & 2 & 0 \\
& 58. & 10 & 5 & (2, 3, 3, 3, 9) & (2, 3, 5) & 13 & 2 & 0 \\
& 59. & 10 & 5 & (2, 3, 3, 6, 6) & (2, 3, 5) & 12 & 2 & 0 \\
& 60. & 10 & 5 & (3, 3, 3, 3, 8) & (2, 3, 5) & 13 & 2 & 0 \\
& 61. & 10 & 5 & (3, 3, 3, 5, 6) & (2, 3, 5) & 11 & 2 & 0 \\
& 62. & 12 & 5 & (1, 5, 5, 5, 8) & (1, 5, 6) & 15 & 3 & 0 \\
& 72. & 12 & 5 & (3, 5, 5, 5, 6) & (3, 4, 5) & 13 & 3 & 0 \\
& 75. & 12 & 5 & (4, 5, 5, 5, 5) & (1, 5, 6) & 15 & 3 & 0 \\
& 75. & 12 & 5 & (4, 5, 5, 5, 5) & (3, 4, 5) & 15 & 3 & 0 \\
& 76. & 14 & 5 & (2, 5, 5, 5, 11) & (2, 5, 7) & 19 & 3 & 0 \\
& & 14 & 5 & (3, 3, 4, 9, 9) & (3, 4, 7) & 19 & 3 & 0 \\
& 77. & 14 & 5 & (5, 5, 5, 5, 8) & (2, 5, 7) & 19 & 3 & 0 \\
& 23. & 3 & 6 & (1, 1, 1, 1, 1, 1) & (1, 1, 1) & 4 & 1 & 0 \\
& 24. & 4 & 6 & (1, 1, 1, 1, 1, 3) & (1, 1, 2) & 6 & 1 & 0 \\
& 25. & 4 & 6 & (1, 1, 1, 1, 2, 2) & (1, 1, 2) & 5 & 1 & 0 \\
& 26. & 6 & 6 & (1, 1, 1, 1, 3, 5) & (1, 2, 3) & 9 & 1 & 0 \\
& 27. & 6 & 6 & (1, 1, 1, 1, 4, 4) & (1, 2, 3) & 9 & 1 & 0 \\
& 28. & 6 & 6 & (1, 1, 1, 2, 2, 5) & (1, 2, 3) & 9 & 1 & 0 \\
& 29. & 6 & 6 & (1, 1, 1, 2, 3, 4) & (1, 2, 3) & 8 & 1 & 0 \\
& 30. & 6 & 6 & (1, 1, 1, 3, 3, 3) & (1, 1, 4) & 7 & 2 & 1 \\
& 30. & 6 & 6 & (1, 1, 1, 3, 3, 3) & (1, 2, 3) & 7 & 1 & 0 \\
& 31.  & 6 & 6 & (1, 1, 2, 2, 2, 4) & (1, 2, 3) & 8 & 1 & 0 \\
& 32. & 6 & 6 & (1, 1, 2, 2, 3, 3) & (1, 2, 3) & 7 & 1 & 0 \\
& 33. & 6 & 6 & (1, 2, 2, 2, 2, 3) & (1, 2, 3) & 7 & 1 & 0 \\
& 34. & 8 & 6 & (1, 3, 3, 3, 3, 3) & (1, 3, 4) & 14 & 2 & 0 \\
& 35. & 10 & 6 & (2, 3, 3, 3, 3, 6) & (2, 3, 5) & 17 & 2 & 0 \\
& 36. & 10 & 6 & (3, 3, 3, 3, 3, 5) & (2, 3, 5) & 16 & 2 & 0 \\
& 14. & 4 & 7 & (1, 1, 1, 1, 1, 1, 2) & (1, 1, 2) & 7 & 1 & 0 \\
& 15. & 6 & 7 & (1, 1, 1, 1, 1, 2, 5) & (1, 2, 3) & 12 & 1 & 0 \\
& 16. & 6 & 7 & (1, 1, 1, 1, 1, 3, 4) & (1, 2, 3) & 11 & 1 & 0 \\
& 17. & 6 & 7 & (1, 1, 1, 1, 2, 2, 4) & (1, 2, 3) & 11 & 1 & 0 \\
& 18. & 6 & 7 & (1, 1, 1, 1, 2, 3, 3) & (1, 2, 3) & 10 & 1 & 0 \\
& 19. & 6 & 7 & (1, 1, 1, 2, 2, 2, 3) & (1, 2, 3) & 10 & 1 & 0 \\
& 20. & 6 & 7 & (1, 1, 2, 2, 2, 2, 2) & (1, 2, 3) & 10 & 1 & 0 \\
& 21. & 10 & 7 & (2, 3, 3, 3, 3, 3, 3) & (2, 3, 5) & 22 & 2 & 0 \\
& 8. & 4 & 8 & (1, 1, 1, 1, 1, 1, 1, 1) & (1, 1, 2) & 9 & 1 & 0 \\
& 9. & 6 & 8 & (1, 1, 1, 1, 1, 1, 1, 5) & (1, 2, 3) & 15 & 1 & 0 \\
& 10. & 6 & 8 & (1, 1, 1, 1, 1, 1, 2, 4) & (1, 2, 3) & 14 & 1 & 0 \\
& 11. & 6 & 8 & (1, 1, 1, 1, 1, 2, 2, 3) & (1, 2, 3) & 13 & 1 & 0 \\
& 12. & 6 & 8 & (1, 1, 1, 1, 1, 1, 3, 3) & (1, 2, 3) & 13 & 1 & 0 \\
& 13. & 6 & 8 & (1, 1, 1, 1, 2, 2, 2, 2) & (1, 2, 3) & 13 & 1 & 0 \\
& 5. & 6 & 9 & (1, 1, 1, 1, 1, 1, 1, 1, 4) & (1, 2, 3) & 17 & 1 & 0 \\
& 6. & 6 & 9 & (1, 1, 1, 1, 1, 1, 1, 2, 3) & (1, 2, 3) & 16 & 1 & 0 \\
& 7. & 6 & 9 & (1, 1, 1, 1, 1, 1, 2, 2, 2) & (1, 2, 3) & 16 & 1 & 0 \\
& 3. & 6 & 10 & (1, 1, 1, 1, 1, 1, 1, 1, 1, 3) & (1, 2, 3) & 19 & 1 & 0 \\
& 4. & 6 & 10 & (1, 1, 1, 1, 1, 1, 1, 1, 2, 2) & (1, 2, 3) & 19 & 1 & 0 \\
& 2. & 6 & 11 & (1, 1, 1, 1, 1, 1, 1, 1, 1, 1, 2) & (1, 2, 3) & 22 & 1 & 0 \\
& 1. & 6 & 12 & (1, 1, 1, 1, 1, 1, 1, 1, 1, 1, 1, 1) & (1, 2, 3) & 25 & 1 & 0 \\
\hline
\end{longtable}

\vspace{8mm}

\subsection{}
Table~\ref{table:further} gives the building data for the four families mentioned in~\ref{ssec:furtherexa}. In this table we no longer list the length of the string~$a$ (which is redundant information anyway). The first three of these examples give a $2$-dimensional family of surfaces, the last a $1$-dimensional family.

\vspace{8mm}

\rowcolors*{2}{gray!20!white!80}{white}
\begin{longtable}{|>{\stepcounter{rowno}\therowno.}r >{$}c<{$} >{$}c<{$} >{$}c<{$} >{$}c<{$} >{$}c<{$} >{$}c<{$}|}
\caption{\textbf{Further examples\label{table:further}}}\\
\hline
\hiderowcolors
\multicolumn{1}{|c}{No.}  & \; m\;{} & a & b & \; g_C\;{} & \; g_D\;{} & \quad K^2\quad{} \\
\showrowcolors
\endfirsthead
\hline
\hiderowcolors
\multicolumn{1}{|c}{No.} & m & a & b & g_C & g_D & K^2 \\ 
\hline
\showrowcolors
\endhead
\hline
\endfoot
\hline
& 4 & (1, 1, 1, 1) & (1, 1, 1, 1) & 3 & 3 & 0 \\
& 6 & (1, 1, 1, 3) & (1, 1, 1, 3) & 4 & 4 & 0 \\
& 6 & (1, 1, 1, 3) & (1, 1, 2, 2) & 4 & 4 & 1 \\
& 6 & (1, 1, 2, 2) & (1, 1, 4) & 4 & 2 & 0 \\
\hline
\end{longtable}

\end{appendices}

{\small

\bigskip

} 

\noindent
\texttt{b.moonen@science.ru.nl}

\noindent
Radboud University Nijmegen, IMAPP, Nijmegen, The Netherlands

\end{document}